\theoremstyle{plain}
\newtheorem{theorem}{Theorem}[section]
\newtheorem{proposition}[theorem]{Proposition}
\newtheorem{lemma}[theorem]{Lemma}
\newtheorem{corollary}[theorem]{Corollary}
\theoremstyle{definition}
\newtheorem{definition}[theorem]{Definition}
\newtheorem{example}[theorem]{Example}
\theoremstyle{remark}
\newtheorem{remark}[theorem]{Remark}
\newcommand{\nat}{\mathbb{N}}
\newcommand{\real}{\mathbb{R}}
\newcommand{\cK}{\mathcal{K}}
\newcommand{\ol}[1]{\overline{#1}}
\newcommand{\ul}[1]{\underline{#1}}
\begin{document}
\title[Derandomization in game-theoretic probability]{Derandomization in game-theoretic probability}

\author{Kenshi Miyabe}
\address[K.~Miyabe]{The University of Tokyo, Japan}
\email{research@kenshi.miyabe.name}

\author{Akimichi Takemura}
\address[K.~Takemura]{The University of Tokyo, Japan}
\email{takemura@stat.t.u-tokyo.ac.jp}

\begin{abstract}
We 
give a general method for constructing  a deterministic strategy
of Reality from a randomized strategy in game-theoretic probability.
The construction can be seen as derandomization in game-theoretic probability.
\end{abstract}

\subjclass[2010]{60G42}
\keywords{Compliance, Strong Law of Large Numbers}

\date{\today}

\maketitle


\section{Introduction}

\subsection{Reality's strategy in Game-theoretic probability}
\label{subsec:intro-reality-strategy}

Game-theoretic probability \cite{ShaVov01}
is a probability theory based on a betting game between two players, Skeptic and Reality.
Sometimes we add the third player called Forecaster.
In game-theoretic probability an almost sure event
is (usually) formalized as an event 
such that Skeptic can increase his capital to infinity
without risking bankruptcy
if the event does not happen.
In this case we say that Skeptic has a winning strategy.
In game-theoretic probability,
in order to prove that an event happens almost surely,
we construct a winning strategy of Skeptic.
A number of such strategies have been constructed so far.
Often these strategies of Skeptic correspond to well-known proofs in measure-theoretic probability that a certain event happens with probability one.  

In this paper, when we just refer to a strategy, 
it is a deterministic strategy.  We explicitly say ``randomized strategy'', when a strategy utilizes 
random variables in the sense of measure-theoretic probability.

There is no counterpart of Reality's strategy in
measure-theoretic probability, because  in measure-theoretic probability Reality is simply generating
random variables under a given probability distribution without any 
specific strategy.  
Hence it is more difficult to derive results on Reality's strategies.
Reality's strategies  correspond to  the notion of derandomization, since if
Reality is following a strategy she is not random in the measure-theoretic sense.


If a two-player game (without Forecaster) is with perfect information
(and the winning set satisfies some regurality condition),
then at least one of the players has a winning strategy in the game
by Martin's Theorem (Theorem \ref{th:martin}).
If Skeptic does not have a winning strategy, Reality should have.
%
For instance, consider the game-theoretic version of
Kolmogorov's strong law of large numbers \cite[Proposition 4.1]{ShaVov01}.
Shafer and Vovk proved the existence
of Reality's winning strategy in Section 4.3 in their book \cite{ShaVov01}.
The proof is, however, nonconstructive.
The two main tools of the proof are
\begin{enumerate}
\item the randomized strategy for Reality that was devised by Kolmogorov,
\item Martin's theorem.
\end{enumerate}
It is unnatural that we need to use such a big theorem, Martin's theorem,
to answer such a simple question.
It was a long-standing question to give a concrete deterministic
strategy of Reality.

Vovk \cite{Vov13} finally gave such a strategy.
The proof is simple, which is a desired property.
The proof seems to be based on the randomized strategy by Kolmogorov,
but it is not clear how these two strategies are related.
Thus it is difficult to know from his proof how to modify the randomized strategy
to answer a similar question in different games.
On the other hand, 
Miyabe and Takemura \cite{Miyabe_convrandseries}
derived a rather strong result on strategies of Reality.
The essential idea is that Reality uses a ``fictional'' strategy of Skeptic.
In fact, Theorem 4.12 of Miyabe and Takemura \cite{Miyabe_convrandseries}
showed the existence of Reality's strategy.
However, they did not give a concrete strategy.
Thus we did not know how the strategy looks like.

In this paper we construct a concrete strategy of Reality
based on the idea above.
The construction goes as follows.
\begin{enumerate}[{[}I{]}]
\item Take a randomized strategy.
\item Construct a strategy of Skeptic that forces the random event.
\item Construct a strategy of Reality using it.
\end{enumerate}
Each step is straightforward and does not require coming up with a new strategy.
Since we construct a deterministic strategy from a randomized one,
we call this \emph{derandomization in game-theoretic probability}.

\subsection{Derandomization}

Randomized algorithm \cite{MotRag95,MitUpf05} has been frequently used
in complexity theory \cite{Gol08,AroBar09}.
One of the reasons is that 
there are some problems such that
it seems difficult to prove that they are polynomial-time computable
but they are polynomial-time computable
with an random oracle with high probability.

The class $\textbf{BPP}$ (Bounded-error Probabilistic Polynomial-time) is, roughly speaking,
the set of problems that are polynomial-time computable
with a randomized algorithm.
It has been conjectured that every problem in $\textbf{BPP}$ is actually
polynomial-time computable,
that is, $\textbf{BPP}=\textbf{P}$.
In other words, the conjecture is asking whether we can always derandomize
in this setting.

An analogous question in computability theory \cite{Coo04,Odi90,Odi99}
has been solved.
On Cantor space $2^\nat$ with the uniform measure $\mu$,
if $A\in2^\nat$ is not computable,
\begin{note}
\marginpar{$2^\omega=2^\nat$. Logician usually use $2^\omega$ and computer scientists use $2^\nat$. For usual mathematician, $2^\nat$ is better.}
\end{note}
then the set of all sequences that compute $A$ has measure $0$
\cite{LMSS56,Sac63}.
Thus, if a sequence is computable by a randomized strategy,
then the sequence should be computable.
In other words, we can always derandomize
if we do not care about computational resource.

Derandomization asks the question how 
we can deterministically construct a sequence random enough.
Construction of such a sequence has been studied
in the theory of algorithmic randomness \cite{Dow10,Nie09}
to separate some randomness notions.
The essential idea is diagonalization.
One recent interesting application is 
the construction of an absolutely normal number
in polynomial time \cite{May,FigNie,BecHeiSla}.

The same technique can be applied to construct a strategy of Reality
that complies with an event in game-theoretic probability.
Derandomization itself is much easier in this case
because we do not care about computability at all.
In contrast, we need to consider a sequence of reals in our case
while derandomization in complexity theory and the theory of algorithmic randomness usually considers an infinite binary sequence.

\subsection{Overview of this paper}

The main theme of this paper is the construction of Reality's strategy.
In Section \ref{sec:BC}
we study Borel-Cantelli lemmas in game-theoretic probability.
This is a simple case and illustrates how the construction goes.
Then, the result will be used in the next section.
In Section \ref{sec:derandomize}
we give a deterministic strategy of Reality that complies with 
the success and the failure of the strong law of large numbers.
In Section \ref{sec:compliance}
we give a general theory of the notion of compliance
and look at some examples.

\section{Preliminaries}\label{sec:pre}


In this paper we mainly consider the unbounded forecasting game defined in Chapter 4 of
Shafer and Vovk \cite{ShaVov01}.

\begin{quote}
{\sc Unbounded Forecasting Game} (UFG)\\
\textbf{Players}: Forecaster, Skeptic, Reality\\
\textbf{Protocol}:\\
\indent $\cK_0:=1$.\\
\indent FOR $n=1,2,\ldots$:\\
\indent\indent Forecaster announces $m_n\in\mathbb{R}$ and $v_n\ge0$.\\
\indent\indent Skeptic announces $M_n\in\mathbb{R}$ and $V_n\ge0$.\\
\indent\indent Reality announces $x_n\in\mathbb{R}$.\\
\indent\indent $\cK_n:=\cK_{n-1}+M_n(x_n-m_n)+V_n((x_n-m_n)^2-v_n)$.\\
\textbf{Collateral Duties}:
Skeptic must keep $\cK_n$ non-negative.
Reality must keep $\cK_n$ from tending to infinity.
\end{quote}

An infinite sequence $\xi=(m_1,v_1,x_1,m_2,v_2,x_2,\cdots)$
of moves of Forecaster and Reality is called a \emph{path}.
Define the sample space
\[\Omega=\{\xi=(m_1,v_1,x_1,m_2,v_2,x_2,\cdots)\ :\ m_n\in\real,\ v_n\ge0,\ x_n\in\real\}\]
as the set of paths.
Any subset $E\subseteq\Omega$ is called an \emph{event}.
We say that a strategy $P$ of Skeptic \emph{forces} an event $E$
if the capital $\cK_n^P(\xi)$ of Skeptic with 
$P$ is non-negative for all $\xi\in\Omega$ and for all $n\ge0$,
and $\xi\not\in E$ implies $\limsup_n\cK_n^P(\xi)=\infty$.
Skeptic \emph{can force} an event if there is a strategy $P$ of Skeptic
that forces the event.  Note that we are not distinguishing ``weak forcing''
and ``forcing'', since they are equivalent (\cite[Lemma 3.1]{ShaVov01}).

\begin{definition}[Miyabe and Takemura \cite{Miyabe_convrandseries}]
By a strategy $R$,
Reality \emph{complies} with an event $E\subseteq\Omega$
if
\begin{enumerate}
\item $\xi\in E$, irrespective of the moves of Forecaster and Skeptic,
with Skeptic observing his collateral duty, 
\item $\sup_n \cK_n<\infty$.
\end{enumerate}
Reality \emph{strongly complies} with $E$ by the strategy $R$
if (ii) is replaced with
$\cK_n\le \cK_0$ for all $n$.
\end{definition}

\begin{theorem}[Shafer and Vovk {\cite[Proposition 4.1]{ShaVov01}}]\label{th:slln}
In the unbounded forecasting game,
\begin{enumerate}
\item Skeptic can force
\[\sum_{n=1}^\infty\frac{v_n}{n^2}<\infty
\Rightarrow\lim_{n\to\infty}\frac{1}{n}\sum_{i=1}^n(x_i-m_i)=0.\]
\item Reality can comply with
\[\sum_{n=1}^\infty\frac{v_n}{n^2}=\infty
\Rightarrow\left(\frac{1}{n}\sum_{i=1}^n(x_i-m_i)\mbox{ does not converge to }0\right).\]
\end{enumerate}
\end{theorem}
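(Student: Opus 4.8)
\emph{Reductions.} I would first note that every move of the protocol, and both hypotheses and conclusions, involve Reality's $n$-th move only through $y_n := x_n - m_n$, so there is no loss in passing to the equivalent game in which $m_n \equiv 0$ and Reality directly announces $y_n$; write $S_n := \sum_{i=1}^n y_i$ and $A_n := \sum_{i=1}^n v_i/i^2$. Also $S_n/n$ fails to tend to $0$ exactly when $\limsup_n |S_n|/n > 0$, so for (ii) it suffices that Reality comply with the event $B := \{A_\infty < \infty\} \cup \{\limsup_n |S_n|/n > 0\}$, which is in fact equal to the event in the statement.

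\emph{Part (i).} The plan is a dyadic-block argument driven by the game-theoretic Doob/Kolmogorov maximal inequality. On a block $\{2^{k-1} < n \le 2^k\}$, the process $\cK_0 + (S_n - S_{2^{k-1}})^2 - \sum_{2^{k-1} < i \le n} v_i$, obtained from the UFG moves $M_n = 2(S_{n-1} - S_{2^{k-1}})$, $V_n = 1$, is a martingale, non-negative once $\cK_0 \ge \sum_{\text{block}} v_i$, and by the usual estimate Skeptic can force that $\max_{2^{k-1} < n \le 2^k} |S_n - S_{2^{k-1}}| \ge \lambda$ entails a gain proportional to $\lambda^2 / \sum_{2^{k-1} < i \le 2^k} v_i$. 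Since $\sum_{2^{k-1} < i \le 2^k} v_i \le 2^{2k}(A_{2^k} - A_{2^{k-1}})$, taking $\lambda = \epsilon 2^k$ makes the estimated ``failure probabilities'' sum to at most $A_\infty/\epsilon^2$; so, mixing the block supermartingales through a Borel--Cantelli-type strategy, mixing again over truncation levels of $A_\infty$ (to make the total budget known in advance) and over $\epsilon \in \{1, 1/2, \dots\}$, Skeptic forces $A_\infty < \infty \Rightarrow \max_{2^{k-1} < n \le 2^k} |S_n - S_{2^{k-1}}| = o(2^k)$. Writing $\delta_k$ for $2^{-k}$ times that maximum, one has $\delta_k \to 0$ and $|S_n| \le |S_1| + \sum_{j < k} 2^j \delta_j + 2^k \delta_k$ for $n$ in block $k$, and an elementary Toeplitz-type summation then gives $S_n/n \to 0$. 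The only delicate point is organizing the two mixtures so non-negativity is never lost even though $A_\infty$, while finite off the target event, is of unknown size; the unboundedness of the $y_n$ causes no trouble, since the quadratic supermartingales above can only benefit from large moves.

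\emph{Part (ii).} Here the plan is to recover a \emph{deterministic} complying strategy from Kolmogorov's randomized one via Martin's theorem. The randomized strategy $\rho$ for Reality: given $(m_n, v_n)$, if $v_n \le n^2$ play $y_n = \pm n$ each with probability $v_n/(2n^2)$ and $y_n = 0$ otherwise, and if $v_n > n^2$ play $y_n = \pm \sqrt{v_n}$ each with probability $1/2$. In all cases $\mathbb{E}[y_n \mid \mathcal{F}_{n-1}] = 0$ and $\mathbb{E}[y_n^2 \mid \mathcal{F}_{n-1}] = v_n$, so against any duty-observing Skeptic strategy $\cK_n$ is a non-negative martingale under $\rho$, hence $\sup_n \cK_n < \infty$ $\rho$-almost surely. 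On the event $A_\infty = \infty$ one has $\sum_n \mathbb{P}(|y_n| \ge n \mid \mathcal{F}_{n-1}) = \infty$ (either infinitely many $v_n \ge n^2$, or a divergent tail of $\sum v_n/n^2$), so by Lévy's conditional Borel--Cantelli lemma $|y_n| \ge n$ infinitely often $\rho$-a.s., whence $|S_n| + |S_{n-1}| \ge n$ infinitely often and $\limsup_n |S_n|/n \ge 1/2$; and on $A_\infty < \infty$ the event $B$ holds by definition. Thus $\rho$-almost every play lies in $B$ and has $\sup_n \cK_n < \infty$. Now I would regard UFG as a two-person game of Reality against a single opponent who makes all of Forecaster's and Skeptic's moves, the opponent being restricted to moves respecting Skeptic's collateral duty and Reality being declared the winner precisely when $\xi \in B$ and $\sup_n \cK_n < \infty$. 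This winning set is Borel, so by Theorem \ref{th:martin} one of the two players has a winning strategy; a winning strategy for the opponent would, played against $\rho$, defeat Reality for $\rho$-almost every sample point, contradicting the previous sentence. Hence Reality has a deterministic winning strategy, which is precisely a strategy with which she complies with $B$.

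\emph{The hard part.} The analysis of $\rho$ (the conditional Borel--Cantelli lemma and the martingale-boundedness argument) and the reduction making Theorem \ref{th:martin} applicable will be the substantive steps of the argument above, but the appeal to Martin's theorem is completely non-constructive. The real difficulty — and the point of the rest of this paper — will be to eliminate it along the scheme [I]--[III] of the introduction: convert $\rho$ into a Skeptic strategy that \emph{forces} $B$ using the game-theoretic Borel--Cantelli machinery of Section \ref{sec:BC}, and then derandomize by a diagonalization in which Reality, seeing $M_n$ and $V_n$ at each round, chooses $y_n$ so as to keep that fictional Skeptic's capital bounded while still steering the path into $B$. I expect this last derandomization step to be the crux.
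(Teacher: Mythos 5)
Your argument is correct, but it is not the route this paper takes. For part (ii) you reproduce exactly the proof that the paper attributes to Shafer and Vovk and explicitly sets out to supersede: Kolmogorov's randomized strategy analyzed via the conditional Borel--Cantelli lemma and the non-negative-martingale convergence theorem, followed by an appeal to Borel determinacy (Theorem \ref{th:martin}) to extract a deterministic strategy. The paper instead obtains (ii) as an immediate consequence of Theorem \ref{th:SLLN-comply}, where an explicit deterministic strategy of Reality is written down (built from a fictional forcing strategy of Skeptic in the spirit of Theorem \ref{th:force-comply}); that route yields strictly more, namely strong compliance ($\cK_n\le\cK_0$) with the two-sided event $\sum_n v_n/n^2<\infty\iff\frac{1}{n}\sum_{i\le n}(x_i-m_i)\to0$, and it is constructive, whereas Martin's theorem gives no hint of what Reality's strategy looks like. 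You correctly identify this in your closing paragraph, so there is no misunderstanding, only a weaker (though valid, since the theorem as stated only asserts existence) conclusion. For part (i), which the paper simply cites, your dyadic-block maximal-inequality argument differs from the usual game-theoretic proof (forcing convergence of $\sum_n (x_n-m_n)/n$ and applying Kronecker's lemma); it is workable, but the point you flag --- arranging the countable mixture over blocks, truncation levels of $A_\infty$, and values of $\epsilon$ so that each component strategy stops betting before its variance budget is exhausted and non-negativity is preserved --- is genuinely the place where the details must be checked, and the Kronecker route avoids most of that bookkeeping.
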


We call the event of (i) the \emph{success of SLLN} (Strong Law of Large Numbers)
and the event of (ii) the \emph{failure of SLLN}.
In the proof of (ii),
Shafer and Vovk \cite[Proposition 4.1]{ShaVov01}
use a randomized strategy of Reality and Martin's theorem,
but did not give a concrete strategy.
Vovk \cite{Vov13} gave a concrete strategy.
The result also follows from Theorem 4.12
of Miyabe and Takemura \cite{Miyabe_convrandseries}.

The following is the key fact to give a strategy of Reality.

\begin{theorem}[Miyabe and Takemura {\cite[Proposition 4.10]{Miyabe_convrandseries}}]\label{th:force-comply}
In the unbounded forecasting game,
if Skeptic can force an event $E$,
then Reality can strongly comply with $E$.
\end{theorem}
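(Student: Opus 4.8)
The plan is to exhibit the strongly compliant strategy explicitly, and to show that it is the \emph{passive} strategy: at every round $n$ Reality simply announces $x_n := m_n$, copying Forecaster's first coordinate. The forcing strategy $P$ for $E$ supplied by the hypothesis enters the argument only through its defining property, used contrapositively; Reality never actually runs $P$ against the real play — she runs it only as a thought experiment against the path she herself generates.

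The first step is the elementary observation that the move $x_n = m_n$ neutralizes every Skeptic. If at round $n$ Skeptic has announced $M_n \in \real$ and $V_n \ge 0$, then the increment of his capital is
\[
M_n(x_n - m_n) + V_n\bigl((x_n-m_n)^2 - v_n\bigr) = -V_n v_n \le 0 ,
\]
since $V_n \ge 0$ and $v_n \ge 0$. Hence $\cK_n \le \cK_{n-1}$ at every round, irrespective of the moves of Forecaster and Skeptic, so $\cK_n \le \cK_0$ for all $n$: this is exactly clause (ii) of strong compliance.

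For clause (i), fix any run of Forecaster and Skeptic and let $\xi = (m_1,v_1,m_1,m_2,v_2,m_2,\ldots)$ be the path Reality produces by the passive strategy (it depends only on Forecaster's moves; Skeptic's moves do not affect it). Now imagine the fictional Skeptic playing $P$ against $\xi$: by the same computation, if $P$ prescribes $(M_n^P,V_n^P)$ at round $n$ then the fictional capital satisfies $\cK_n^P(\xi) = \cK_{n-1}^P(\xi) - V_n^P v_n \le \cK_{n-1}^P(\xi)$, so $0 \le \cK_n^P(\xi) \le \cK_0$ for all $n$, using that $P$ keeps its capital non-negative on \emph{every} path. Thus $\limsup_n \cK_n^P(\xi) < \infty$, and since $P$ forces $E$ this is impossible unless $\xi \in E$. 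As this argument is uniform over all runs of Forecaster (and Skeptic's moves are irrelevant), Reality strongly complies with $E$.

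There is no serious analytic obstacle; the one point that genuinely must be used is that a forcing strategy keeps its capital non-negative along \emph{all} paths, so that boundedness of $\cK_n^P(\xi)$ for the particular path generated by the passive strategy really does contradict $\xi \notin E$. The proof is short precisely because $x_n = m_n$ is simultaneously safe for the real Skeptic and completely uninformative for the fictional one, so the substantive work lies in constructing the forcing strategy $P$ in the first place, not in this conversion.
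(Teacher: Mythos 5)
Your proof is correct, but it takes a genuinely different---and much more degenerate---route than the paper's. The paper (following Proposition 4.10 of Miyabe--Takemura, whose mechanism is spelled out at the end of Section 3) has Reality actively simulate the forcing strategy $P$ as a \emph{fictional} Skeptic, form a convex combination $(1-\epsilon)S+\epsilon P$ with the real Skeptic's strategy $S$, and at each round choose $x_n$ so that the combined capital does not increase; boundedness of the $P$-component then yields $\xi\in E$ because $P$ forces $E$, while boundedness of the $S$-component yields (strong) compliance. Your observation is that in the unbounded forecasting game this machinery collapses: the single move $x_n=m_n$ makes the increment of \emph{every} Skeptic strategy equal to $-V_nv_n\le 0$, so it simultaneously bounds the real capital and the fictional one, and Reality never needs to evaluate $P$---indeed the same passive strategy strongly complies with every forceable event at once, which is a slightly stronger conclusion. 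The trade-off is that your argument is tied to the existence of a ``neutral'' move in Reality's move space; it fails, for instance, in the coin-tossing game, where $x_n\in\{0,1\}$ and no move zeroes out $M_n(x_n-p_n)$, and it gives no leverage on the events of Sections 3 and 4, where Reality must comply with an event that Skeptic \emph{cannot} force. The fictional-strategy technique is exactly what survives in those settings, which is why the paper develops it even though, for this particular theorem in this particular game, your shortcut is complete. One small imprecision: you claim parenthetically that the path depends only on Forecaster's moves, but Forecaster may react to Skeptic's earlier moves, so the realized path can vary with Skeptic's play; this is harmless, since your argument shows that every path with $x_n=m_n$ for all $n$ lies in $E$.
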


In the proof of this theorem, a strategy of Reality
was constructed using the strategy of Skeptic
that forces the event.

\section{Borel-Cantelli lemmas}\label{sec:BC}

In this section we focus on game-theoretic versions of
Borel-Cantelli lemmas,
which will play an important role
to give 
the strategy of Reality that complies with the success and the failure of SLLN
in the next section.

\begin{quote}
{\sc Coin-Tossing Game}\\
\textbf{Players}: Forecaster, Skeptic, Reality\\
\textbf{Protocol}:\\
\indent $\cK_0:=1$.\\
\indent FOR $n=1,2,\ldots$:\\
\indent\indent Forecaster announces $p_n\in[0,1]$.\\
\indent\indent Skeptic announces $M_n\in\real$.\\
\indent\indent Reality announces $x_n\in \{0,1\}$.\\
\indent\indent $\cK_n:=\cK_{n-1}+M_n(x_n-p_n)$.\\
\textbf{Collateral Duties}:
Skeptic must keep $\cK_n$ non-negative.
Reality must keep $\cK_n$ from tending to infinity.
\end{quote}

The following result is 
a game-theoretic version of L\'evy's extension of the Borel–Cantelli Lemma.

\begin{proposition}[Miyabe-Takemura {\cite[Example 2.3]{Miyabe_convrandseries}}]
\label{prop:BC}
In the coin-tossing game Skeptic can force
\begin{align}\label{eq:BC}
\sum_n p_n<\infty\iff \sum_n x_n<\infty.
\end{align}
\end{proposition}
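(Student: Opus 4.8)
The plan is to exhibit two explicit multiplicative betting strategies for Skeptic and then combine them. Write $A$ for the event $\sum_n p_n<\infty$ and $B$ for $\sum_n x_n<\infty$. The biconditional in \eqref{eq:BC} fails on a path exactly when ($A$ holds and $B$ fails) or ($B$ holds and $A$ fails), so it suffices to force the two implications $A\Rightarrow B$ and $\lnot A\Rightarrow\lnot B$ separately and then take a convex mixture of the resulting strategies.

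For $A\Rightarrow B$ I would let Skeptic bet the fixed fraction $M_n=\tfrac12\cK_{n-1}$ every round, so that $\cK_n=\cK_{n-1}\bigl(1+\tfrac12(x_n-p_n)\bigr)$ and hence $\cK_n=\prod_{i\le n}\bigl(1+\tfrac12(x_i-p_i)\bigr)$. Since $x_i\in\{0,1\}$ and $p_i\in[0,1]$, every factor lies in $[\tfrac12,\tfrac32]$, so the capital is positive on every path, and the collateral duty is met. On a path with $\sum_i p_i<\infty$ and $\sum_i x_i=\infty$ the factors with $x_i=0$ multiply to at least $\prod_i(1-\tfrac12 p_i)>0$, while (because then $p_i\to0$) all but finitely many of the infinitely many factors with $x_i=1$ exceed a constant strictly greater than $1$; hence $\cK_n\to\infty$, which is what forcing requires on the complement.

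For $\lnot A\Rightarrow\lnot B$, i.e.\ $\sum_n p_n=\infty\Rightarrow\sum_n x_n=\infty$, I would use the symmetric short position $M_n=-\tfrac12\cK_{n-1}$, giving $\cK_n=\prod_{i\le n}\bigl(1-\tfrac12(x_i-p_i)\bigr)$ with each factor again in $[\tfrac12,\tfrac32]$, so the capital stays positive. On a path with $\sum_i p_i=\infty$ and $\sum_i x_i<\infty$ the finitely many factors with $x_i=1$ contribute a positive constant, and $\log\cK_n$ is bounded below by a constant plus $c\sum_{i\le n,\ x_i=0}p_i$ for some fixed $c>0$ (via $\log(1+t)\ge ct$ on $[0,\tfrac12]$); since $\sum_{i:\,x_i=1}p_i\le\#\{i:x_i=1\}<\infty$, we get $\sum_{i:\,x_i=0}p_i=\infty$, so $\cK_n\to\infty$. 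Finally, rescale each strategy to start from capital $\tfrac12$ and add them: the total capital is positive on every path, and on any path where \eqref{eq:BC} fails one of the two summands diverges, so the mixed strategy forces \eqref{eq:BC}.

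The only real subtlety, and the step I would flag, is the choice of betting fraction. Using fraction $1$ makes the telescoping products collapse very cleanly, but it is fatal when some $p_n$ equals $0$ or $1$, because then a single unfavorable move can drive Skeptic's capital to $0$ and keep it there precisely on a path we must punish; damping the stake to a fraction bounded away from $1$ (here $\tfrac12$, though any $\lambda\in(0,1)$ works) repairs this, after which the divergence estimates are routine. It is worth remarking that no independence assumption enters anywhere — this is exactly why the game-theoretic statement is L\'evy's version of the Borel--Cantelli lemma rather than the classical second lemma.
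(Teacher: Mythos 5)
Your proof is correct. You use the same skeleton as the paper — split the biconditional \eqref{eq:BC} into the two implications $\sum_n p_n<\infty\Rightarrow\sum_n x_n<\infty$ and $\sum_n p_n=\infty\Rightarrow\sum_n x_n=\infty$, force each separately, and take a convex mixture — but the sub-strategies are genuinely different. The paper bets additive stakes that shrink geometrically: $M_n=-2^{-b_n-1}$ with $b_n$ the number of heads so far (Lemma \ref{lem:BC-infinite}), and $M_n=2^{-c_n-1}$ with $c_n-1\le\sum_{k\le n}p_k<c_n$ (Lemma \ref{lem:BC-finite}); non-negativity of $\cK_n$ then rests on the explicit summation bounds \eqref{eq:Hn-sum} and \eqref{eq:pn}. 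You instead bet the fixed fraction $\pm\tfrac12$ of current capital, so every one-step factor lies in $[\tfrac12,\tfrac32]$, positivity is automatic, and divergence on the bad paths follows from standard infinite-product estimates; your remark that the fraction must be damped away from $1$ to survive $p_n\in\{0,1\}$ is exactly the right point to flag. For the proposition in isolation your route is arguably cleaner. What the paper's additive choice buys is the downstream derandomization: the combined strategy $M_n=2^{-c_n-2}-2^{-b_n-2}$ is reused verbatim as Reality's fictional Skeptic and produces the simple closed-form threshold $d_n$ in \eqref{eq:dn} of Theorem \ref{th:BC-reality}, whereas a proportional strategy would make that threshold depend on a recursively accumulated fictional capital and complicate the compliance argument.
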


By essentially the same argument in the proof of Theorem \ref{th:force-comply},
we can show that Reality can strongly comply with the event \eqref{eq:BC}.
Here we give a concrete strategy.

In Step I, we take a randomized strategy.
In measure theoretic probability,
a counterpart of \eqref{eq:BC} is already shown.
Then, we consider the following derived randomized strategy:
$x_n=1$ with probability $p_n$ and $x_n=0$ with probability $1-p_n$.
Then the property \eqref{eq:BC} is true almost surely
in the sense of measure-theoretic probability.

In Step II, construct a strategy of Skeptic that forces the random event.
Here, the random event is \eqref{eq:BC}.
By the correspondence between measure-theoretic probability
and game-theoretic probability,
this is possible by considering a protocol strong enough.
We need a concrete deterministic strategy of Skeptic
that forces \eqref{eq:BC}, and the simpler the better.

\begin{lemma}\label{lem:BC-infinite}
In the coin-tossing game,
Skeptic can force
\[\sum_n p_n=\infty\Rightarrow\sum_n x_n=\infty.\]
\end{lemma}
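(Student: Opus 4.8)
The goal is to construct a concrete deterministic strategy for Skeptic that forces the implication $\sum_n p_n = \infty \Rightarrow \sum_n x_n = \infty$. Equivalently, on every path where $\sum_n x_n < \infty$ but $\sum_n p_n = \infty$, Skeptic's capital must have $\limsup_n \cK_n = \infty$, while staying nonnegative everywhere. The plan is to mimic the measure-theoretic proof: if the $x_n$ were Bernoulli($p_n$), then under $\sum p_n = \infty$ a second-moment/Levy argument forces infinitely many successes. Game-theoretically, the natural device is a multiplicative (product) martingale-style bet that grows whenever $x_n = 1$ happens and only mildly decays when $x_n = 0$.

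**The key steps.**

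First I would work with the increments $d_n := \cK_n - \cK_{n-1} = M_n(x_n - p_n)$ and look for a betting function of the form $M_n = c\,\cK_{n-1}$ for a small constant $c > 0$ to be chosen, so that $\cK_n = \cK_{n-1}(1 + c(x_n - p_n))$. Since $x_n - p_n \in [-1,1]$ and we want $c < 1$, this keeps $\cK_n \ge 0$ (indeed positive) automatically, discharging Skeptic's collateral duty. Taking logarithms, $\log \cK_n = \sum_{i=1}^n \log(1 + c(x_i - p_i))$. On a path with $\sum x_i < \infty$, all but finitely many terms have $x_i = 0$, contributing $\log(1 - c p_i)$; using $\log(1-t) \le -t$ for $t \in [0,1)$, the tail sum is bounded above by $-c \sum p_i = -\infty$, which pushes $\log \cK_n$ to $-\infty$ — the wrong direction. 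So a fixed $c$ does not work directly, and this is exactly the obstacle (see below).

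**Fixing it: bet only near the end, or use a countermeasure.**

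The standard remedy is that Skeptic does not need his own capital to grow on \emph{every} such path simultaneously with a single $c$; he can diagonalize. Concretely, I would instead have Skeptic bet to \emph{profit from each individual $x_n = 1$ event} while being insured against the $x_n = 0$ losses by the hypothesis structure. One clean route: since $\sum p_n = \infty$ is part of the antecedent, reparametrize by betting $M_n = \lambda$ (a constant stake) only while a running ``budget'' $\sum_{i \le n} p_i$ has not yet consumed the current capital; more robustly, use the $\epsilon$-superfarthing / mixture trick — take a countable family of strategies $P^{(k)}$ where $P^{(k)}$ uses constant fraction $c_k \to 0$ but is ``switched on'' only after stage $N_k$, and let Skeptic play the average $\sum_k 2^{-k} P^{(k)}$. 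On a bad path, choose $k$ large enough that the tail $\sum_{i > N_k} p_i$ contributes loss at most, say, $\tfrac{1}{2}$ in log-capital before the first post-$N_k$ success; each subsequent success multiplies by $(1 + c_k)$ and (by $\sum x_i < \infty$ there are only finitely many, but we can also appeal to: if there were only finitely many successes total we instead want capital from the $\sum p_i = \infty$ side). The cleanest packaging, and the one I would actually write, is: Skeptic forces $\sum_n x_n = \infty$ on the sub-event $\{\sum_n p_n = \infty\}$ by the product strategy $\cK_n = \cK_{n-1}(1 + c(x_n - p_n))$ \emph{together with} the observation that if additionally $\sum_n x_n < \infty$, then $\cK_n \to 0$; so Skeptic instead plays $M_n = -c\,\cK_{n-1}$ (betting the other way) on a parallel account, giving $\cK'_n = \cK'_{n-1}(1 - c(x_n - p_n)) = \cK'_{n-1}(1 + c(p_n - x_n))$, whose log is $\sum \log(1 + c(p_i - x_i)) \ge \sum (c(p_i - x_i) - c^2(p_i-x_i)^2) \ge c\sum p_i - c\sum x_i - c^2 n \cdot(\text{something})$ — and here one must be careful that the quadratic error term $\sum c^2 (p_i - x_i)^2$ is dominated by $c \sum p_i$ since $(p_i - x_i)^2 \le |p_i - x_i| \le \dots$, which is not quite true termwise but holds because $(p_i-x_i)^2 \le p_i + x_i$ up to constants when $x_i \in \{0,1\}$. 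With $\sum x_i < \infty$ and $\sum p_i = \infty$, choosing $c$ small makes $c\sum p_i$ dominate, so $\log \cK'_n \to \infty$.

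**Main obstacle.**

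The real difficulty is controlling the second-order error in the log-expansion: a single fixed betting fraction $c$ either bleeds capital on the $x_n = 0$ moves or has an uncontrolled quadratic term, so the argument must either (a) use a countable mixture over $c \to 0$ and starting times $N_k \to \infty$, or (b) exploit the $x_n \in \{0,1\}$ structure so that $(p_n - x_n)^2 \le (1 + x_n) p_n + x_n$ or a similar pointwise bound tames $\sum c^2 (p_n - x_n)^2$ against $c \sum p_n$. I expect to present the mixture version, since it is the most transparent and parallels the diagonalization remarked upon in the introduction; the bulk of the writing is the routine estimate $\log(1+t) \ge t - t^2$ for $t \ge -\tfrac12$ plus a bookkeeping argument that on any bad path some component $P^{(k)}$ has capital tending to infinity, hence so does the mixture.
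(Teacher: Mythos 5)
Your final argument (your route (b)) is correct, but it takes a genuinely different path from the paper. You use a multiplicative strategy $M_n=-c\,\cK_{n-1}$ with a fixed fraction $0<c\le 1/2$, so that $\cK_n=\cK_{n-1}\bigl(1+c(p_n-x_n)\bigr)$ stays positive automatically, and then you control $\log\cK_n$ via $\log(1+t)\ge t-t^2$ together with the pointwise bound $(p_n-x_n)^2\le p_n+x_n$ for $x_n\in\{0,1\}$; this yields $\log\cK_n\ge(c-c^2)\sum_{i\le n}p_i-(c+c^2)\sum_{i\le n}x_i\to\infty$ on any path with $\sum_n p_n=\infty$ and $\sum_n x_n<\infty$. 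That is a complete proof, and note that no mixture over $c_k\to 0$ and starting times is needed: a single fixed $c$ already works, because the quadratic error is dominated termwise once you use $x_n\in\{0,1\}$. The paper instead uses an additive strategy $M_n=-2^{-b_n-1}$, where $b_n$ is the number of heads so far: the total loss over all heads is bounded by the geometric series $\sum_k 2^{-b_k-1}\le 1$ (so $\cK_n>0$ with initial capital $1$), while on a path with finitely many heads the stake freezes at $2^{-b_N-1}$ and the gains $2^{-b_N-1}\sum_{k\ge N}p_k$ diverge. The paper's version avoids logarithms and Taylor estimates entirely and, importantly for the rest of the paper, produces a bounded, explicitly additive stake that is later recycled (in combination with Lemma 3.3) to build Reality's strategy in Theorem 3.4; your multiplicative strategy would be harder to feed into that construction. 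One caution: the ``mixture'' packaging you say you would actually write down is the weak point of your note --- as described it bets on successes (profiting when $x_n=1$), which is the wrong direction for this implication and is where your first attempt already failed; if you submit this proof, present the single-account, fixed-$c$ version of route (b) and drop the mixture.
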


\begin{proof}
Let
\begin{equation}
\label{eq:heads-position}
H_n=\{k<n\ :\ x_k=1\},\ T_n=\{k<n\ :\ x_k=0\}
\end{equation}
be the sets of time indices of heads and tails before the round $n$ and
let 
\begin{equation}
\label{eq:bn}
b_n=\#H_n
\end{equation}
denote the number of heads before the round $n$.
Note that 
for any distinct $k,j\in H_n$, we have $k<j\Rightarrow b_k<b_j$. Hence
\begin{equation}
\label{eq:Hn-sum}
\sum_{k\in H_n}2^{-b_k-1} \le 1
\end{equation}
for every $n$.

Consider the following strategy of Skeptic:
\[M_n=-2^{-b_n-1}.\]
We claim that this strategy forces the event.

First, we show that this strategy keeps $\cK_n$ non-negative.
Note that
\begin{align*}
\cK_n
=1-\sum_{k\in H_{n+1}}2^{-b_k-1}(1-p_k)-\sum_{k\in T_{n+1}}2^{-b_k-1}(-p_k)
>1-\sum_{k\in H_{n+1}}2^{-b_k-1}\ge0.
\end{align*}

Next, suppose that $\sum_n p_n=\infty$ and
$\sum_n x_n<\infty$.
Then, there exists $N$ such that
$n\ge N\Rightarrow x_n=0$.
Thus, for every $n\ge N$,
\[\cK_n=\cK_{N-1}+2^{-b_N-1}\sum_{k=N}^n p_k\to\infty.\]
\end{proof}

\begin{lemma}\label{lem:BC-finite}
In the coin-tossing game,
Skeptic can force
\[\sum_n p_n<\infty\Rightarrow\sum_n x_n<\infty.\]
\end{lemma}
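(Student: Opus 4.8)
The plan is to make Skeptic bet a fixed fraction of his current capital \emph{on heads}, turning his capital into a product martingale that grows geometrically in the number of heads seen. Explicitly, I would take
\[
M_n=\tfrac12\,\cK_{n-1},
\]
so that $\cK_n=\cK_{n-1}\bigl(1+\tfrac12(x_n-p_n)\bigr)$ and, with the notation $H_n,T_n$ of \eqref{eq:heads-position},
\[
\cK_n=\prod_{k=1}^n\Bigl(1+\tfrac12(x_k-p_k)\Bigr)=\prod_{k\in H_{n+1}}\frac{3-p_k}{2}\ \prod_{k\in T_{n+1}}\frac{2-p_k}{2}.
\]
The factor $\tfrac12$ (rather than staking the whole capital, which is the first thing one tries) is there for the collateral duty: if Forecaster plays $p_k=1$ and Reality answers $x_k=0$, the all-in version drops to $0$ and stays there, whereas here the tail factor is never below $\tfrac12$.

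Since $0\le p_k\le 1$, each head factor $\frac{3-p_k}{2}$ is $\ge 1$ and each tail factor $\frac{2-p_k}{2}$ is $\ge\tfrac12$, so $\cK_n>0$ for every play and every $n$, and Skeptic meets his collateral duty. To show the strategy forces the event, suppose a path is not in it, i.e.\ $S:=\sum_n p_n<\infty$ while $\sum_n x_n=\infty$. Taking logarithms,
\[
\log\cK_n=\sum_{k\in H_{n+1}}\log\frac{3-p_k}{2}+\sum_{k\in T_{n+1}}\log\frac{2-p_k}{2}.
\]
For the tail contribution I would use the elementary inequality $\log(1-t)\ge -2t$ for $t\in[0,\tfrac12]$ (applied with $t=p_k/2$), which bounds $\sum_{k\in T_{n+1}}\log\frac{2-p_k}{2}\ge -\sum_k p_k\ge -S$ uniformly in $n$. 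For the head contribution, each term is nonnegative, and because $S<\infty$ we have $p_k\to 0$, so beyond some index every head factor is $\ge\frac54$ and hence every such head term is $\ge\log\frac54>0$; since there are infinitely many heads, this sum tends to $+\infty$. Therefore $\log\cK_n\to\infty$, hence $\limsup_n\cK_n=\infty$, and Skeptic forces the event.

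There is no deep obstacle here; the one point that needs a little care is the choice to stake only a fixed fraction of the capital, so that non-negativity survives the degenerate moves $p_k=1$, $x_k=0$ — the rest is the routine logarithmic estimate above. One can also run the argument purely multiplicatively, using $\prod_{k\in T_{n+1}}\frac{2-p_k}{2}\ge\prod_k(1-p_k)>0$ when $\sum p_k<\infty$, but bookkeeping the head factors is cleanest in logarithms.
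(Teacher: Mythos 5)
Your proof is correct, but it takes a genuinely different route from the paper's. You bet a fixed \emph{fraction of the current capital} on heads, so $\cK_n=\prod_{k\le n}\bigl(1+\tfrac12(x_k-p_k)\bigr)$ is a product martingale: positivity is immediate (every factor lies in $[\tfrac12,\tfrac32]$), the tails cost at most a bounded amount of log-capital via $\log(1-t)\ge-2t$ on $[0,\tfrac12]$, and infinitely many heads with $p_k\to0$ each contribute at least $\log\tfrac54$, so $\log\cK_n\to\infty$. The paper instead bets a fixed \emph{absolute} stake $M_n=2^{-c_n-1}$, where $c_n$ tracks $\lceil\sum_{k\le n}p_k\rceil$; the dyadic weighting makes the total possible loss from tails at most $1$ (the bound \eqref{eq:pn}), while each of the infinitely many heads eventually adds at least a fixed amount $2^{-c-3}$. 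Both arguments are sound and of comparable length; yours is the more standard "Kelly-type" multiplicative construction and arguably more self-motivating. What the paper's additive, capital-independent choice buys is downstream reusability: the strategies of Lemma \ref{lem:BC-infinite} and Lemma \ref{lem:BC-finite} are averaged into the single linear strategy $M_n=2^{-c_n-2}-2^{-b_n-2}$, whose explicit form (and the bound \eqref{eq:pn}) is then used verbatim to define the thresholds $d_n$ in Reality's strategy in Theorem \ref{th:BC-reality}. Substituting your proportional strategy would still prove this lemma, but the derandomization step would have to be re-derived with different bookkeeping, since your stake depends on the realized capital rather than only on Forecaster's moves.
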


\begin{proof}
Let $c_n$ be the natural number satisfying
\begin{equation}
\label{eq:cn}
c_n-1\le\sum_{k=1}^np_k<c_n.
\end{equation}
Note that
\[
\sum_{k : c_k = i} p_k = \sum_{k : c_k\le i} p_k - \sum_{k : c_k\le i-1} p_k < i - (i-2)=2.
\]

Consider the following strategy of Skeptic:
\[M_n=2^{-c_n-1}.\]
We claim that this strategy forces the event.

First we show that this strategy keeps $\cK_n$ non-negative.
Note that
\begin{align}\label{eq:pn}
\sum_{k\in T_{n+1}}2^{-c_k-1}p_k\le
\sum_{k=1}^\infty 2^{-c_k-1}p_k\le
\sum_{i=1}^\infty\sum_{k : c_k=i}2^{-i-1}p_k
\le\sum_{i=1}^\infty2^{-i}\le1,
\end{align}
where $T_n$ is defined in \eqref{eq:heads-position}.
Then 
\[\cK_n\ge1+\sum_{k\in T_{n+1}}M_k(x_k-p_k)>1-\sum_{k\in T_{n+1}}2^{-c_k-1}p_k\ge0.\]

Next we show that this strategy forces the event.
Assume that $\sum_n p_n<\infty$ and $\sum_n x_n=\infty$.
Let $c$ be the natural number such that $c-1<\sum_n p_n\le c$.
Then there exists $N_0$ such that
$c-1<\sum_{k=1}^{N_0} p_k$.
Note that, for $n>N_0$, we have $c\le c_n\le c+1$.
Since $\sum_n p_n<\infty$, there exists $N_1$ such that
$n\ge N_1\Rightarrow p_n<1/2$.
Let $N=\max\{N_0,N_1\}$.
For $n\ge N$ such that $x_n=1$, we have
\[\cK_n-\cK_{n-1}
=2^{-c_n-1}(1-p_n)
\ge 2^{-c-2}\cdot\frac{1}{2}\ge 2^{-c-3}.\]
For $n\ge N$ such that $x_n=0$, we have
\[\cK_n-\cK_{n-1}
=2^{-c_n-1}(0-p_n)
\ge -2^{-c-1}p_n.\]
Since $\sum_n p_n<\infty$, we have $\limsup_n \cK_n=\infty$.
\end{proof}



In Step III, construct a strategy of Reality using it.
First note that Skeptic can force \eqref{eq:BC}
by combining the strategies in Lemma \ref{lem:BC-infinite}
and Lemma \ref{lem:BC-finite}.
Using these strategies,
Reality can strongly comply with the same event
with the technique of Theorem \ref{th:force-comply}.
We give the derived strategy first,
and explain how to derive it in detail later.

\begin{theorem}\label{th:BC-reality}
In the coin-tossing game,
Reality can strongly comply with
\[\sum_n p_n<\infty\iff\sum_n x_n<\infty.\]
\end{theorem}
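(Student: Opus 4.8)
The plan is to run the argument behind Theorem~\ref{th:force-comply} in the coin-tossing game, with an explicit forcing strategy assembled from Lemma~\ref{lem:BC-infinite} and Lemma~\ref{lem:BC-finite}, and then to unfold that argument so that the resulting strategy of Reality is written down concretely. Since Skeptic can force \eqref{eq:BC} --- this is Proposition~\ref{prop:BC}, and it follows by combining the two preceding lemmas --- the argument of Theorem~\ref{th:force-comply}, applied in the coin-tossing game, already yields that Reality can strongly comply with \eqref{eq:BC}. So the substance of the proof is to fix a convenient forcing strategy $P$ with $\cK_0^P=\cK_0$ and then trace that argument through with this $P$.

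For $P$ I would take the average of the two Skeptic strategies of Lemma~\ref{lem:BC-finite} and Lemma~\ref{lem:BC-infinite},
\[
M_n^P=\tfrac12\bigl(2^{-c_n-1}-2^{-b_n-1}\bigr),
\]
where $b_n=\#H_n$ is as in \eqref{eq:bn} and $c_n$ is the natural number of \eqref{eq:cn}, both read off the realized path. A short computation shows that its capital equals $\tfrac12$ times the sum of the two nonnegative capitals occurring in those lemmas, so it is nonnegative with initial value $1=\cK_0$; and on any path outside \eqref{eq:BC}, at least one of the two components has $\limsup$ of its capital equal to infinity while the other stays nonnegative, so $\limsup_n\cK_n^P=\infty$ there. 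Hence $P$ forces \eqref{eq:BC}.

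Next I would describe the derived strategy $R$. Following the argument of Theorem~\ref{th:force-comply}, Reality plays $P$ as a \emph{fictional} Skeptic aimed at her own future moves: when round $n$ begins she has already produced $x_1,\dots,x_{n-1}$, so $b_n$ is fixed; once Forecaster announces $p_n$ she computes $c_n$, hence $M_n^P$; and once the real Skeptic announces $M_n$ she announces the $x_n\in\{0,1\}$ prescribed by that argument --- the move that keeps her own capital from rising above $\cK_0$ while not letting the fictional capital $\cK_n^P$ escape to infinity, spending the real Skeptic's current shortfall $\cK_0-\cK_{n-1}$ as the budget with which she is allowed to move against $M_n$. I would then write this rule out in closed form in $p_n$, $M_n$, $b_n$, $c_n$ so that $R$ is genuinely concrete.

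Finally I would verify the two clauses of strong compliance. Clause (ii), $\cK_n\le\cK_0$ for every $n$, should be built into $R$, which by design never lets the real capital exceed $\cK_0$. Clause (i), that the realized path lies in \eqref{eq:BC}: by construction $R$ keeps $\limsup_n\cK_n^P<\infty$, whereas $P$ forces \eqref{eq:BC} and hence has $\limsup_n\cK_n^P=\infty$ on every path outside \eqref{eq:BC}; so the realized path lies in \eqref{eq:BC}. The step I expect to be the main obstacle is making these two demands --- real capital bounded by $\cK_0$, fictional capital bounded --- compatible within a single explicit rule and verifying that by a direct estimate; this is exactly where the a~priori bounds \eqref{eq:Hn-sum} and \eqref{eq:pn}, which stop the two component strategies from drawing their capital down too far, have to enter, and it is essentially the only place where a calculation is needed.
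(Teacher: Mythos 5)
Your proposal is correct and follows essentially the same route as the paper: your fictional strategy $M_n^P=\tfrac12(2^{-c_n-1}-2^{-b_n-1})$ is exactly the paper's $F$ with $M_n=2^{-c_n-2}-2^{-b_n-2}$, your budget device (spending the shortfall $\cK_0-\cK_{n-1}$ against the real Skeptic) is the paper's $\epsilon$-trick at the first round $n_0$ with $M_{n_0}\ne0$, and the bounds \eqref{eq:Hn-sum} and \eqref{eq:pn} are used in precisely the place you identify. The paper just carries out the remaining step you defer, writing the rule in closed form as the threshold test $M_n\le d_n$ with $d_n=\frac{\epsilon\cK_{n_0}}{1-\epsilon}(2^{-b_n-2}-2^{-c_n-2})$.
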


\begin{proof}
Let $b_n, c_n$ be defined by \eqref{eq:bn} and \eqref{eq:cn}.
We claim that the following strategy of Reality strongly complies with the event:
\begin{quote}
Reality waits for the first round $n_0$ such that
Skeptic announces $M_n\ne0$.
If such a round does not exist, let $n_0=\infty$.

For $n<n_0$ including the case $n_0=\infty$,
Reality announces $x_n$ as
\[x_n=\begin{cases}1&\mbox{ if }c_n\ne c_{n-1}\\
0&\mbox{ if }c_n=c_{n-1}.\end{cases}\]

For $n=n_0$,
Reality announces $x_n$ as
\[x_n=\begin{cases}1&\mbox{ if }M_n<0\\
0&\mbox{ if }M_n>0.\end{cases}\]

If $\cK_{n_0}=0$, then for $n>n_0$ Reality announces $x_n$ as
\[x_n=\begin{cases}1&\mbox{ if }c_n\ne c_{n-1}\\
0&\mbox{ if }c_n=c_{n-1}.\end{cases}\]

If $\cK_{n_0}>0$, then let 
\[\epsilon=1-\frac{\cK_{n_0}}{\cK_0}.\]
For $n>n_0$,
Reality announces $x_n$ as
\[x_n=\begin{cases}1&\mbox{ if }M_n\le d_n\\
0&\mbox{ if }M_n>d_n\end{cases}\]
where
\begin{align}\label{eq:dn}
d_n=\frac{\epsilon\cK_{n_0}}{1-\epsilon}(2^{-b_n-2}-2^{-c_n-2}).
\end{align}
\end{quote}

We show that this strategy strongly complies with the event.
If $n_0=\infty$, then clearly $\sum_n p_n<\infty$ if and only if $\sum_n x_n<\infty$, and $\cK_n=\cK_0$.
Then, we can assume that $n_0<\infty$.

Since $\cK_{n_0}-\cK_{n_0-1}<0$, we have $\cK_{n_0}<\cK_0$.
If $\cK_{n_0}=0$, then Skeptic should announce $M_n=0$ for every $n>n_0$
in order to keep $\cK_n$ non-negative.
Thus, $\cK_n=0$ for every $n>n_0$.
In this case clearly $\sum_n p_n<\infty$ if and only if $\sum_n x_n<\infty$.

In what follows, we assume $\cK_{n_0}>0$. Then $0<\epsilon<1$.
Suppose that $\sum_n p_n=\infty$ and $\sum_n x_n<\infty$.
Since $b_n$ is bounded and $c_n$ goes to infinity,
there exists $\delta>0$ such that $d_n>\delta$ and $x_n=0$ for all sufficiently large $n>N$.
Then for such an $n$, we have
\begin{align*}
\cK_n
=&\ \cK_{N}-\sum_{k=N+1}^n M_kp_k
\le\cK_{N}-\sum_{k=N+1}^n d_k p_k\\
\le&\ \cK_{N}-\sum_{k=N+1}^n \delta p_k\to-\infty
\end{align*}
as $n\to\infty$.
Thus, such a strategy of Skeptic is not allowed.

Suppose that $\sum_n p_n<\infty$
and $\sum_n x_n=\infty$.
Since $c_n$ is bounded, $b_n$ goes to infinity
and $p_n$ goes to $0$,  there exists $\delta>0$ such that
$d_n<-\delta$ and $p_n\le1/2$
for all sufficiently large $n>N$.
For $n> N$ such that $x_n=1$,
\begin{align*}
\cK_n-\cK_{n-1}=M_n(1-p_n)\le d_n(1-p_n)\le-\delta(1-p_n)\le-\frac{\delta}{2}.
\end{align*}
For $n>N$ such that $x_n=0$,
we have
\[M_n>d_n\ge-2^{-c_n-2}\frac{\epsilon\cK_{n_0}}{1-\epsilon},\]
and
\begin{align*}
\cK_n-\cK_{n-1}=-M_np_n\le 2^{-c_n-2}\frac{\epsilon\cK_{n_0}}{1-\epsilon}p_n.
\end{align*}
Thus, $\cK_n\to-\infty$ and such a strategy of Skeptic is not allowed.

Finally we show that $\sup_n \cK_n\le1$.
Since we have $\cK_0=(1-\epsilon)\cK_{n_0}$,
it suffices to show that
\[\cK_n\le\frac{\cK_{n_0}}{1-\epsilon}=\cK_{n_0}+\frac{\epsilon\cK_{n_0}}{1-\epsilon}.\]
For $n\ge n_0$ such that $x_n=1$,
we have
\[\cK_n-\cK_{n-1}
\le M_n(1-p_n)
\le d_n
\le\frac{\epsilon\cK_{n_0}}{1-\epsilon}2^{-b_n-2}.\]
For $n\ge n_0$ such that $x_n=0$,
we have
\[\cK_n-\cK_{n-1}
=-M_np_n
\le\frac{\epsilon\cK_{n_0}}{1-\epsilon}2^{-c_n-2}p_n.\]
By \eqref{eq:Hn-sum} and \eqref{eq:pn}, we have
$\cK_n-\cK_{n_0}\le\dfrac{\epsilon\cK_{n_0}}{1-\epsilon}$.
\end{proof}

From now on we explain how we derived the above strategy.
The goal is to construct a strategy of Reality that complies with the event $E$
of $\sum_n p_n<\infty\iff\sum_n x_n<\infty$.
It suffices to give a strategy with which
Reality's move is ``random'' in the following two senses:
\begin{enumerate}
\item[(a)] The capital is bounded.
\item[(b)] The path satisfies the almost-sure property $E$.
\end{enumerate}
The meaning of randomness in measure-theoretic probability is not clear.
In the theory of algorithmic randomness,
one formulation of randomness is finiteness of the capital
for all betting strategies that are effective in some sense.
In game-theoretic probability,
randomness of a path means the finiteness of the capital in the game.
For instance, Vovk and Shen \cite{VovShe10} have used the terminology of
``game-random''.
With this view, we express the property (b) by the finiteness of the capital
with respect to the strategy with which Skeptic can force the event $E$.

By the proof of Lemma \ref{lem:BC-infinite} and Lemma \ref{lem:BC-finite},
the following strategy $F$ of Skeptic forces the event $E$:
\[M_n=2^{-c_n-2}-2^{-b_n-2}.\]
Reality uses this strategy $F$ as a fictional strategy.
We denote by $S$ the real strategy of Skeptic.
In order to comply with the event $E$,
all Reality has to do is to make the capital with the strategy $(S+F)/2$
finite.
Note that the finiteness of the capital with $(S+F)/2$
implies the finiteness of the capital with $S$ and the capital with $F$.
Furthermore, the finiteness of the capital with $F$
implies the event $E$ because $F$ forces the event $E$.
The strategy $O=(S+F)/2$ announces
\[M_n^O=\frac{M_n^S+2^{-c_n-2}-2^{-b_n-2}}{2}.\]
Reality can make $\cK_n^O\le\cK_{n-1}^O$ by announcing
\[x_n=\begin{cases}1&\mbox{ if }M_n^O\le0\\
0&\mbox{ if }M_n^O>0.\end{cases}\]
Note that 
\[M_n^O\le0\iff M_n^S\le2^{-b_n-2}-2^{-c_n-2}.\]
Then this strategy complies with the event $E$.
Notice that this strategy gives the essential part of
$d_n$ in \eqref{eq:dn}
in the proof of Theorem \ref{th:BC-reality}.

To give a strategy that ``strongly'' complies with the event,
we need an additional little trick.
The idea is taken from the proof of Proposition 4.10 in
\cite{Miyabe_convrandseries}.
A rough idea is as follows.
Wait until the round $n_0$ satisfying $M_{n_0}\ne0$
so that Reality can make the capital strictly less than
the initial capital.
Let $1-\epsilon$ be the ratio of the capital at $n_0$ and the initial capital.
After the round $n_0$,
Reality only has to make the capital with $(1-\epsilon)S+\epsilon F$
non-increasing.
The derived strategy is the strategy in the proof of Theorem \ref{th:BC-reality}.

\section{Derandomization in the Unbounded Forecasting Game and its generalization}\label{sec:derandomize}

In this section we give Reality's strategies complying with the success and the failure of the strong law of large numbers at the same time in the Unbounded Forecasting Game and its generalization.

\subsection{A strategy of Reality for the Unbounded Forecasting Game}


\begin{theorem}\label{th:SLLN-comply}
In the unbounded forecasting game,
Reality can strongly comply with
\[\sum_n \frac{v_n}{n^2}<\infty\iff\lim_{n\to\infty}\frac{1}{n}\sum_{i=1}^n(x_i-m_i)=0.\]
\end{theorem}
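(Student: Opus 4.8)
\emph{Plan.} I would follow the three-step derandomization recipe of Section~\ref{sec:derandomize} inside the unbounded forecasting game, using the Borel--Cantelli machinery of Section~\ref{sec:BC} with the reading $p_n:=v_n/n^2$ as the ``coin-tossing probabilities''. \emph{Step I.} Take Kolmogorov's randomized strategy for Reality (Shafer and Vovk~\cite[Section~4.3]{ShaVov01}): independently over $n$ let $x_n-m_n$ have mean $0$ and variance $v_n$, concentrated so that $x_n=m_n$ with probability $1-p_n$ and $x_n-m_n=\pm n$ with probability $p_n/2$ each (with the usual adjustment when $p_n>1$). The events $\{x_n\ne m_n\}$ are independent, so by the second Borel--Cantelli lemma $x_n\ne m_n$ infinitely often precisely when $\sum_n p_n=\infty$; a Kronecker-type averaging turns this into $\frac1n\sum_{i\le n}(x_i-m_i)\not\to 0$ on that event, while on the complement $x_n-m_n$ is eventually $0$, so the average tends to $0$. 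Hence the biconditional holds almost surely.

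\emph{Step II.} The measure-one event splits into two implications. The direction $\sum_n v_n/n^2<\infty\Rightarrow\frac1n\sum_{i\le n}(x_i-m_i)\to 0$ is \emph{forced} by a concrete strategy $F^{+}$ of Skeptic, namely the one supplied by Theorem~\ref{th:slln}(i). The reverse direction cannot be forced, since on the path $x_n\equiv m_n$ with $\sum_n v_n/n^2=\infty$ Skeptic's capital is non-increasing no matter what he announces; so Reality will manufacture it herself. She self-restricts her moves, plays $x_n=m_n$ on most rounds, and, on a sparse subset of rounds selected from the increases of $c_n:=\lfloor\sum_{k\le n}v_k/k^2\rfloor$ (so that this subset is infinite exactly when $\sum_n v_n/n^2=\infty$), she makes a ``large'' deviation of a carefully chosen size whose sign she picks so as to force $\bigl|\frac1n\sum_{i\le n}(x_i-m_i)\bigr|$ above a fixed positive constant infinitely often. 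The link between $\sum_n p_n$ and the number of her deviations is tracked by a Borel--Cantelli--type fictional Skeptic in the spirit of Section~\ref{sec:BC}, whose capital detects both ``infinitely many deviations while $\sum_n p_n<\infty$'' and persistent blocking.

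\emph{Step III.} Reality uses as her fictional Skeptic the mixture $F=\tfrac12F^{+}+\tfrac12F^{BC}$, where $F^{BC}$ is this Borel--Cantelli strategy transplanted to the present protocol through the variance channel (so that the increment $V_n((x_n-m_n)^2-v_n)$ plays the role of a coin-tossing increment once Reality has self-restricted her deviations). As in the proof of Theorem~\ref{th:BC-reality} she waits for the first round $n_0$ with a nonzero move of the real Skeptic $S$, sets $\epsilon=1-\cK_{n_0}/\cK_0$, and from then on plays --- within her self-restriction --- so that the capital of $O:=(1-\epsilon)S+\epsilon F$ does not increase, except that on the divergence-forcing rounds of Step~II she is allowed to pay a small, summable amount from an $\epsilon$-budget. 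If the combined capital stays bounded then $\cK^S_n\le\cK_0$ for every $n$ (strong compliance), and the capitals of $F^{+}$ and $F^{BC}$ stay bounded, so either $\sum_n v_n/n^2<\infty$ --- in which case $F^{+}$ forces the average to $0$ and $F^{BC}$ guarantees only finitely many deviations --- or Reality deviated infinitely often and forced the average not to converge; either way the path lies in the event.

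\emph{Main obstacle.} The crux is Step~III: one has to fix the size and timing of the divergence-forcing deviations so that, \emph{simultaneously}, the capital of $O$ stays bounded (indeed $\cK^S_n\le\cK_0$), the running average is pushed away from $0$ along an infinite subsequence, and Reality still respects her collateral duty --- all against an adversary who controls $M^S_n,V^S_n$ and whose choices, together with the prescribed moves of $F^{+}$, feed into $O$. The delicate quantitative point is that Skeptic can try to price every prospective deviation out of reach by a cheap combination of a small positive $V^S_n$ and a canceling $M^S_n$, so one must choose the deviation sizes and the $\epsilon$-budget allocation so that infinitely many deviations remain affordable whenever $\sum_n v_n/n^2=\infty$, using that $\sum_n v_n/n^2=\infty$ forces $\sum_n v_n=\infty$ and hence that persistent blocking is incompatible with Skeptic's collateral duty; the boundary cases $v_n=0$ and $v_n\ge n^2$ need separate, easy handling. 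Once these inequalities are in place, the verification follows the pattern already displayed for Theorem~\ref{th:BC-reality}.
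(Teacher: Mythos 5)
Your skeleton matches the paper's: restrict Reality to the support of Kolmogorov's randomized strategy, transplant the Borel--Cantelli fictional Skeptic of Section~\ref{sec:BC} into the variance channel (so that $V_n((x_n-m_n)^2-v_n)$ becomes a coin-tossing increment with ``probability'' $p_n=v_n/n^2$), wait for the first round $n_0$ with a nonzero move of the real Skeptic, set $\epsilon=1-\cK_{n_0}/\cK_0$, and keep the capital of $(1-\epsilon)S+\epsilon F$ from increasing. But you have deferred exactly the step that constitutes the proof, and the one quantitative hint you give for it is wrong. The content of the theorem is the explicit threshold
\[
d_n=\frac{\epsilon\cK_{n_0}}{1-\epsilon}\,\frac{2^{-b_n-2}-2^{-c_n-2}}{n^2},
\]
against which Reality tests $V_n$: she deviates ($x_n=\pm n$) precisely when $V_n\le d_n$, so the deviation rounds are \emph{not} predetermined by the increases of $c_n$ but are negotiated with Skeptic. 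Two estimates then do all the work. If Reality deviates only finitely often while $\sum_n v_n/n^2=\infty$, then $b_n$ is bounded and $c_n\to\infty$, so $d_n\ge\delta/n^2$ eventually, and blocking costs Skeptic $V_nv_n>d_nv_n\ge\delta v_n/n^2$ per round; the divergent series that kills him is $\sum_n v_n/n^2$ itself, \emph{not} $\sum_n v_n$ as you assert. Indeed $d_n$ must scale like $2^{-b_n}/n^2$ so that the total gain on deviation rounds, $\sum_{x_n\ne 0}d_n(n^2-v_n)\le\frac{\epsilon\cK_{n_0}}{1-\epsilon}\sum_n 2^{-b_n-2}\le\frac{\epsilon\cK_{n_0}}{1-\epsilon}$ (as in \eqref{eq:Hn-sum}), stays within your ``$\epsilon$-budget''; a per-round blocking cost proportional to $v_n$ alone is therefore unavailable, and an argument resting on $\sum_n v_n=\infty$ does not close. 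Without the explicit $d_n$ and these two inequalities, the ``main obstacle'' you name remains an obstacle.

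Two further points. You propose to choose the sign of each deviation so as to push the running average above a fixed constant; this can conflict with the capital constraint, which needs $M_nx_n\le 0$, i.e.\ the sign must oppose the real Skeptic's $M_n$. The conflict is avoidable because the sign is irrelevant to non-convergence: if $|x_n|\ge n$ infinitely often, then $\frac1n\sum_{i\le n}x_i\to 0$ would force $x_n/n\to 0$, a contradiction (this is the Lemma~\ref{lem:term-bound} argument), so the magnitude alone suffices and the sign can be spent entirely on neutralizing $M_n$. Finally, mixing the forcing strategy $F^{+}$ of Theorem~\ref{th:slln}(i) into the fictional Skeptic is redundant and costly: after the self-restriction, finitely many deviations already imply that the average tends to $0$, while $F^{+}$'s capital is not automatically bounded on the deviating paths you construct, so you acquire an extra bookkeeping obligation for no benefit. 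The paper's fictional Skeptic is the Borel--Cantelli strategy alone, acting only through the $V$-channel.
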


Note that this theorem implies (ii) of Theorem \ref{th:slln}.
We show this by giving a concrete strategy of Reality.
In what follows, we assume $m_n=0$ for every $n$ without loss of generality.

In Step I, we take a randomized strategy.
In measure-theoretic probability,
the failure of SLLN was shown by the following randomized strategy
devised by Kolmogorov \cite{Kol30}:
if $v_n<n^2$,
\[x_n:=
\left(\begin{array}{c}
n\\
-n\\
0
\end{array}\right)
\mbox{ with probability }
\left(\begin{array}{c}
v_n/(2n^2)\\
v_n/(2n^2)\\
1-v_n/n^2\\
\end{array}\right)
,\]
respectively;
if $v_n\ge n^2$,
\[x_n:=
\left(\begin{array}{c}
\sqrt{v_n}\\
-\sqrt{v_n}
\end{array}\right)
\mbox{ with probability }
\left(\begin{array}{c}
1/2\\
1/2
\end{array}\right).\]
Here, to show the non-convergence,
the second part of the Borel-Cantelli lemmas is used.
In contrast, if $\sum_n v_n/n^2<\infty$, then
by the first part of the Borel-Cantelli lemma,
SLLN holds almost surely.

In Step II, construct a strategy of Skeptic that forces the random event.
Motivated by the strategy above,
we restrict $x_n\in\{0,\pm n\}$ if $v_n<n^2$,
and $x_n\in\{\pm\sqrt{v_n}\}$ if $v_n\ge n^2$.
Since the property holds almost surely
in the sense of measure-theoretic probability,
it is possible to construct a strategy of Skeptic
that forces the event with this restriction.
With the restriction,
the strategy we need to construct is the one of Skeptic
that forces
\[\sum_n \frac{v_n}{n^2}<\infty\iff x_n=0\mbox{ for all but finitely many }n.\]
Such a strategy of Skeptic can be constructed
by modifying the strategy constructed in the previous section.

In Step III, construct a strategy of Reality using it.
By the technique explained in the previous section,
we can construct a strategy of Reality that strongly complies with the event
with the restriction,
which means that Reality can strongly comply with the event
without the restriction.

In the following proof, we only give the final derived strategy of Reality.

We can forget the round $n$ such that $v_n = 0$ by letting $x_n = 0$.
Thus, we assume that $v_n > 0$ for every $n$.

\begin{proof}
We claim that the following strategy of Reality strongly complies with the event:
\begin{quotation}
Let 
\[b_n=\#\{k<n\ :\ x_k\ne0\}\]
and $c_n$ be the natural number satisfying
\[c_n-1\le\sum_{k=1}^n\frac{v_k}{k^2}<c_n.\]
Reality waits for the first round $n_0$ such that
Skeptic announces $(M_n,V_n)\ne(0,0)$.
If such a round does not exist, let $n_0=\infty$.

For $n<n_0$ including the case $n_0=\infty$,
Reality announces $x_n$ as
\[x_n=\begin{cases}
n&\mbox{ if }c_n\ne c_{n-1}\\
0&\mbox{ if }c_n=c_{n-1}.\end{cases}\]

For $n=n_0$,
Reality announces $x_n$ as
\[x_n=\begin{cases}
1&\mbox{ if }V_n=0\mbox{ and }M_n<0\\
-1&\mbox{ if }V_n=0\mbox{ and }M_n>0\\
0&\mbox{ if }V_n>0.
\end{cases}\]

If $\cK_{n_0}=0$,
then for $n>n_0$ Reality announces $x_n$ as
\[x_n=\begin{cases}
n&\mbox{ if }c_n\ne c_{n-1}\\
0&\mbox{ if }c_n=c_{n-1}.\end{cases}\]

If $\cK_{n_0}>0$,
let 
\[\epsilon=1-\frac{\cK_{n_0}}{\cK_0}.\]
For $n>n_0$ Reality announces $x_n$ as
\[x_n=
\begin{cases}
n&\mbox{ if }v_n<n^2,\ V_n\le d_n\mbox{ and }M_n<0\\
-n&\mbox{ if }v_n<n^2,\ V_n\le d_n\mbox{ and }M_n\ge0\\
0&\mbox{ if }v_n<n^2,\ V_n> d_n\\
\sqrt{v_n}&\mbox{ if }v_n\ge n^2\mbox{ and }M_n<0\\
-\sqrt{v_n}&\mbox{ if }v_n\ge n^2\mbox{ and }M_n\ge0 , 
\end{cases}\]
where
\[d_n=\frac{\epsilon\cK_{n_0}}{1-\epsilon}\frac{2^{-b_n-2}-2^{-c_n-2}}{n^2}.\]
\end{quotation}

We show that this strategy strongly complies with the event.
If $n_0=\infty$, then clearly $\sum_n v_n/n^2<\infty$ if and only if
$x_n=0$ for all but finitely many $n$, and $\cK_n=\cK_0$ for every $n$.
Then, we can assume that $n_0<\infty$.

Consider the round $n=n_0$.
If $V_n=0$, then
\[\cK_{n_0}-\cK_{n_0-1}=-|M_n|<0.\]
If $V_n>0$, then
\[\cK_{n_0}-\cK_{n_0-1}=-V_nv_n<0.\]
Thus, $\cK_{n_0}<\cK_0$.

If $\cK_{n_0}=0$, then Skeptic should announce $(M_n,V_n)=(0,0)$
for every $n>n_0$ in order to keep $\cK_n$ non-negative.
Thus, $\cK_n=0$ for every $n>n_0$.
In this case clearly $\sum_n v_n/n^2<\infty$ if and only if
$x_n=0$ for all but finitely many $n$.

In what follows we assume $\cK_{n_0}>0$.
Then $0<\epsilon<1$.

Suppose that $\sum_n v_n/n^2=\infty$ and 
$x_n=0$ for all but finitely many $n$.
Since $b_n$ is bounded and $c_n$ goes to infinity,
there exists $\delta$ such that $d_n>\delta/n^2$
for all sufficiently large $n>N$.
Then, for such an $n$,
\begin{align*}
\cK_n
= &\ \cK_{N}-\sum_{k=N+1}^n V_kv_k
\le\cK_{N}-\sum_{k=N_0+1}^n d_k v_k\\
\le &\ \cK_{N}-\sum_{k=N+1}^n \delta\cdot\frac{v_k}{k^2}\to-\infty
\end{align*}
as $n\to\infty$.
Thus, such a strategy of Skeptic is not allowed.

Suppose that $\sum_n v_n/n^2<\infty$
and $x_n\ne0$ for infinitely many $n$.
Since $c_n$ is bounded and $b_n$ goes to infinity,
$d_n$ is negative for all sufficiently large $n$.
Since $\sum_n v_n/n^2<\infty$, we have $v_n<n^2$ for all sufficiently large $n$.
Thus, $x_n$ should be $0$ for all sufficiently large $n$.
This is a contradiction.

Finally we show that $\sup_n \cK_n\le1$.
It suffices to show that
$\cK_n\le\cK_{n_0}+\frac{\epsilon\cK_{n_0}}{1-\epsilon}$.
For $n\ge n_0$ such that $v_n<n^2$ and $V_n\le d_n$,
we have
\[\cK_n-\cK_{n-1}\le V_n(x_n^2-v_n)
\le d_n n^2
\le \frac{\epsilon\cK_{n_0}}{1-\epsilon}2^{-b_n-2}.\]
For $n\ge n_0$ such that $v_n<n^2$ and $V_n>d_n$,
we have
\[\cK_n-\cK_{n-1}=-V_nv_n\le0.\]
For $n\ge n_0$ such that $v_n\ge n^2$,
we have
\[\cK_n-\cK_{n-1}\le V_n(x_n^2-v_n)=0.\]
Thus,
$\cK_n-\cK_{n_0}\le\dfrac{\epsilon\cK_{n_0}}{1-\epsilon}$.
\end{proof}

\subsection{A strategy of Reality in a generalization of the Unbounded Forecasting Game}

There are some possible ways in which we generalize our result for the Unbounded Forecasting Game.
Kumon, Takemura and Takeuchi \cite{KumTakTak07}
have obtained similar results in a game which generalizes the Unbounded Forecasting Game.
Miyabe and Takemura \cite{Miyabe_convrandseries}
have shown the existence of the strategy 
that strongly complies with the failure of SLLN
in a rather general setting.
Here,
we give a stronger result in the general setting
in the following senses.
\begin{enumerate}
\item We give a concrete deterministic strategy.
\item The strategy strongly complies with the success and the failure of SLLN
at the same time.
\item We use weaker assumptions.
\item The strategy is much simpler.
\end{enumerate}


The following protocol is from Section 5 in Miyabe and Takemura \cite{Miyabe_convrandseries}.

\begin{quote}
{\sc Unbounded Forecasting Game with General Hedge} (UFGH)\\
\textbf{Parameters}: A single function $h:\real\to\real$\\
\textbf{Players}: Forecaster, Skeptic, Reality\\
\textbf{Protocol}:\\
\indent $\cK_0:=1$.\\
\indent FOR $n=1,2,\ldots$:\\
\indent\indent Forecaster announces $m_n\in\mathbb{R}$ and $v_n\ge0$.\\
\indent\indent Skeptic announces $M_n\in\mathbb{R}$ and $V_n\ge0$.\\
\indent\indent Reality announces $x_n\in\mathbb{R}$.\\
\indent\indent $\cK_n:=\cK_{n-1}+M_n(x_n-m_n)+V_n(h(x_n-m_n)-v_n)$.\\
\textbf{Collateral Duties}:
Skeptic must keep $\cK_n$ non-negative.
Reality must keep $\cK_n$ from tending to infinity.
\end{quote}

\noindent
\textbf{Assumption}
\begin{enumerate}
\item[(A0)] $h(x)=h(|x|)\ge0$.
\item[(A1)] $h(x)/x$ is monotone increasing for $x>0$.
\item[(A2)] $h(x)/x^2$ is monotone decreasing for $x>0$.
\item[(A3)] $h(x)=x^2$ for $|x|\le1$.
\end{enumerate}

Here, we are taking into account Remark 5.3 of \cite{Miyabe_convrandseries}.

\begin{theorem}[Theorem 5.9 in \cite{Miyabe_convrandseries}]
Suppose that $h$ satisfies \textrm{(A0)-(A3)} and
that $g$ is a positive increasing function.
Then in UFGH, Reality can strongly comply with
\[\sum_n\frac{v_n}{g(A_n)}=\infty\Rightarrow\sum_{k=1}^n\frac{x_k-m_k}{h^{-1}\circ g(A_k)}\mbox{ does not converge.}\]
\end{theorem}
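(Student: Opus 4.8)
The plan is to transcribe, almost line for line, the three-step derandomization from the proof of Theorem~\ref{th:SLLN-comply}, with $x_n^2$ replaced by the hedge $h(x_n)$ and $n^2$ replaced by $g(A_n)$; the only genuinely new ingredient is the choice of the right ``big jump''. As there, take $m_n=0$ for all $n$ and, by letting Reality play $x_n=0$ on the rounds with $v_n=0$, assume $v_n>0$. Write $r_n=h^{-1}\circ g(A_n)$, $b_n=\#\{k<n:x_k\neq0\}$, and let $c_n$ be the natural number with $c_n-1\le\sum_{k=1}^{n}v_k/g(A_k)<c_n$. In Step~I one adapts Kolmogorov's randomized strategy: on a round with $v_n<g(A_n)$ Reality plays $x_n=\pm r_n$, each with probability $v_n/\bigl(2g(A_n)\bigr)$, and $x_n=0$ otherwise, while on a round with $v_n\ge g(A_n)$ she plays $x_n=\pm h^{-1}(v_n)$ with probability $1/2$ each. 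Here (A0) and (A3) give $h(0)=0$ and $h(x)=h(|x|)$, while (A1) and (A2), sandwiching $h$ between a linear and a quadratic function, force $h$ to be a continuous strictly increasing bijection of $[0,\infty)$, so that $h^{-1}$ is a genuine two-sided inverse; the defining identity $h(r_n)=g(A_n)$ is then exactly what makes $\mathbb E[h(x_n)]=v_n$, the analogue of the variance-matching $\mathbb E[x_n^2]=v_n$ of the square case. The point of the restriction is that $|x_n|/\bigl(h^{-1}\circ g(A_n)\bigr)\ge1$ whenever $x_n\neq0$: it equals $1$ on the small rounds, and on the large ones it equals $h^{-1}(v_n)/h^{-1}\bigl(g(A_n)\bigr)\ge1$ since $v_n\ge g(A_n)$ and $h^{-1}$ is increasing. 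Consequently, if $x_n\neq0$ for infinitely many $n$, then $\sum_{k\le n}(x_k-m_k)/\bigl(h^{-1}\circ g(A_k)\bigr)$ has infinitely many increments of modulus $\ge1$ and cannot converge. Hence, under the restriction, the target event reduces to the Borel--Cantelli statement ``$\sum_n v_n/g(A_n)=\infty\Rightarrow x_n\neq0$ infinitely often'', which is the event of Lemma~\ref{lem:BC-infinite} with $v_n/g(A_n)$ in place of $p_n$.

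For Steps~II and III I would copy the strategy from the proof of Theorem~\ref{th:SLLN-comply} with $g(A_n)$ for $n^2$ and $r_n$ for $n$: Reality waits for the first round $n_0$ with $(M_n,V_n)\neq(0,0)$ (setting $n_0=\infty$ if there is none, in which case she plays $x_n=r_n$ when $c_n\neq c_{n-1}$ and $x_n=0$ otherwise, so that $\cK_n=\cK_0$ and the event holds); at $n=n_0$ she plays $x_{n_0}=\pm1$ with sign opposite to $M_{n_0}$ if $V_{n_0}=0$ and $x_{n_0}=0$ if $V_{n_0}>0$, so that $\cK_{n_0}<\cK_0$; if $\cK_{n_0}=0$ then Skeptic is frozen to $(0,0)$ and Reality reverts to the rule of the $n_0=\infty$ case; and if $\cK_{n_0}>0$ she sets $\epsilon=1-\cK_{n_0}/\cK_0$ and, for $n>n_0$, plays $x_n=\pm r_n$ (sign opposite to $M_n$) when $v_n<g(A_n)$ and $V_n\le d_n$, plays $x_n=0$ when $v_n<g(A_n)$ and $V_n>d_n$, and plays $x_n=\pm h^{-1}(v_n)$ (sign opposite to $M_n$) when $v_n\ge g(A_n)$, where
\[
d_n=\frac{\epsilon\cK_{n_0}}{1-\epsilon}\cdot\frac{2^{-b_n-2}-2^{-c_n-2}}{g(A_n)},
\]
the direct analogue of $d_n$ in \eqref{eq:dn}. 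This threshold is what the fictional-strategy heuristic of Section~\ref{sec:BC} produces: the $2^{-b_n-2}$ term keeps the positive capital increments summable (via \eqref{eq:Hn-sum}), and the $2^{-c_n-2}$ correction is inherited from the coin-tossing construction.

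The verification is then the same computation as in Theorem~\ref{th:SLLN-comply}. For strong compliance, one checks that for $n>n_0$ the capital increment is at most $\frac{\epsilon\cK_{n_0}}{1-\epsilon}2^{-b_n-2}$ on a small round with $x_n\neq0$ --- using $M_nx_n\le0$ and $V_n(h(x_n)-v_n)\le V_nh(r_n)=V_ng(A_n)\le d_ng(A_n)$ --- and at most $0$ on every other round, so that \eqref{eq:Hn-sum} yields $\cK_n-\cK_{n_0}\le\frac{\epsilon\cK_{n_0}}{1-\epsilon}$, i.e.\ $\cK_n\le\cK_0$. For the event, if $\sum_n v_n/g(A_n)=\infty$ but $x_n=0$ for all large $n$, then $b_n$ is eventually constant while $c_n\to\infty$, whence $d_n\ge\delta/g(A_n)$ for some $\delta>0$, and then $\cK_n=\cK_N-\sum_{k>N}V_kv_k\le\cK_N-\delta\sum_{k>N}v_k/g(A_k)\to-\infty$, which is impossible; so $x_n\neq0$ infinitely often and the series diverges. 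The step I expect to be the main obstacle --- the only part requiring real work beyond transcription --- is checking that the choices $r_n=h^{-1}\bigl(g(A_n)\bigr)$ and ``large round $\iff v_n\ge g(A_n)$'' are simultaneously compatible with $\mathbb E[h(x_n)]=v_n$, with $|x_n|\ge h^{-1}\circ g(A_n)$ on the nonzero rounds, and with $h\bigl(h^{-1}(v_n)\bigr)=v_n$ on the large rounds; this is exactly where the structural assumptions (A0)--(A3) and the monotonicity of $g$ must be used with care, and, taking Remark~5.3 of \cite{Miyabe_convrandseries} into account, is the delicate point.
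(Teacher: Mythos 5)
Your proof is correct, but it is not the route the paper takes: the paper does not prove this statement directly, it quotes it from \cite{Miyabe_convrandseries} and re-derives it as a corollary of the stronger Theorem \ref{th:strong-compliance} (the two-sided version with the normalizer $h^{-1}\circ g(A_n)$ placed \emph{outside} the partial sum). The essential difference is the choice of the big jump. Because the conclusion here concerns the series $\sum_k (x_k-m_k)/\bigl(h^{-1}\circ g(A_k)\bigr)$, whose convergence forces its terms to tend to $0$, your choice $e_n=h^{-1}\bigl(g(A_n)\bigr)$ --- giving normalized jumps of modulus exactly $1$ on the small rounds and at least $1$ on the large ones --- already precludes convergence once $x_n\ne0$ infinitely often, and your verifications (non-negativity of the increments' positive part, divergence of $\cK_n$ to $-\infty$ if Skeptic forces $x_n=0$ eventually while $\sum_n v_n/g(A_n)=\infty$, and $\cK_n\le\cK_0$ via \eqref{eq:Hn-sum}) transcribe from Theorem \ref{th:SLLN-comply} exactly as you say. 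The paper instead must handle the quotient form, where normalized jumps of size $1$ are compatible with convergence to a nonzero limit (Lemma \ref{lem:term-bound} only excludes jumps exceeding $|d|+1$); this is why it inserts the sequence $\{\epsilon_n\}$ of Lemma \ref{lem:epsilon}, plays $e_n=h^{-1}\bigl(g(A_n)\cdot\epsilon_n^{-1}\bigr)$, and uses (A2) to show the normalized jumps grow like $\epsilon_n^{-1/2}\to\infty$. So your argument buys a simpler, self-contained strategy for this particular statement (your free use of (A3) for $h(0)=0$, $h(1)=1$ and the continuity and invertibility of $h$ is legitimate under the stated hypotheses), while the paper's detour buys, from a single construction, the companion Theorem 5.10, the iff-strengthening, and the weakening of (A3) to (A4). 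One caveat worth recording: your strategy as written would \emph{not} prove those stronger statements, precisely because it omits the $\epsilon_n$ amplification.
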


\begin{theorem}[Theorem 5.10 in \cite{Miyabe_convrandseries}]
Let $h(x)=x^r$ where $1\le r\le 2$
and $g$ be a positive increasing function.
Then in UFGH, Reality can strongly comply with
\[\sum_n\frac{v_n}{g(A_n)}=\infty\Rightarrow\frac{\sum_{k=1}^n(x_k-m_k)}{h^{-1}\circ g(A_n)}\mbox{ does not converge.}\]
\end{theorem}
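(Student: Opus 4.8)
The plan is to run the three-step derandomization of Section~\ref{sec:derandomize} with the substitutions $n^2\rightsquigarrow g(A_n)$, $\pm n\rightsquigarrow\pm h^{-1}(g(A_n))$ and $\pm\sqrt{v_n}\rightsquigarrow\pm h^{-1}(v_n)$, so that the whole argument runs parallel to the proof of Theorem~\ref{th:SLLN-comply}. As there, reduce to $m_n=0$ without loss of generality, discard the rounds with $v_n=0$, and write $\beta_n=h^{-1}(g(A_n))$. For Step~I, Kolmogorov's randomized strategy suggests restricting Reality's moves so that she plays $x_n\in\{0,\pm\beta_n\}$ when $v_n<g(A_n)$ and $x_n=\pm h^{-1}(v_n)$ when $v_n\ge g(A_n)$, always with the sign chosen so that $M_nx_n\le0$. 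Since $h(t)=t^r$, we have $h(\beta_n)=g(A_n)$ and $h(h^{-1}(v_n))=v_n$, so the variance bet contributes $V_n(h(x_n)-v_n)\le V_ng(A_n)$ on the first kind of round and exactly $0$ on the second, while the linear bet $M_nx_n=-|M_n|\,|x_n|\le0$ always. With this restriction the required event follows from the Borel--Cantelli-type reduced event
\[
\sum_n\frac{v_n}{g(A_n)}<\infty\iff x_n=0\text{ for all but finitely many }n,
\]
just as in Theorem~\ref{th:SLLN-comply}; the deduction is addressed in the last paragraph.

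For Steps~II and III, I follow the recipe of Section~\ref{sec:BC}: the bounds of Lemmas~\ref{lem:BC-infinite} and \ref{lem:BC-finite}, with $p_n$ replaced by $v_n/g(A_n)$ and ``heads'' read as ``$x_n\ne0$'', supply the fictional Skeptic strategy, and the $\epsilon$-trick then yields the following concrete strategy of Reality. She waits for the first round $n_0$ with $(M_{n_0},V_{n_0})\ne(0,0)$ and plays there as in the proof of Theorem~\ref{th:SLLN-comply} so that $\cK_{n_0}<\cK_0$; putting $\epsilon=1-\cK_{n_0}/\cK_0$, $b_n=\#\{k<n:x_k\ne0\}$, and $c_n$ the natural number with $c_n-1\le\sum_{k\le n}v_k/g(A_k)<c_n$, for $n>n_0$ she plays $x_n=\pm\beta_n$ when $v_n<g(A_n)$ and $V_n\le d_n$, $x_n=0$ when $v_n<g(A_n)$ and $V_n>d_n$, and $x_n=\pm h^{-1}(v_n)$ when $v_n\ge g(A_n)$, where
\[
d_n=\frac{\epsilon\cK_{n_0}}{1-\epsilon}\cdot\frac{2^{-b_n-2}-2^{-c_n-2}}{g(A_n)}.
\]
The rounds with $v_n\ge g(A_n)$, on which the increment $v_n/g(A_n)$ may exceed $1$, are treated separately exactly as the rounds with $v_n\ge n^2$ in Theorem~\ref{th:SLLN-comply}. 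Strong compliance is then checked as there: the only positive capital increments occur on rounds with $x_n\ne0$ and $v_n<g(A_n)$, each at most $\frac{\epsilon\cK_{n_0}}{1-\epsilon}2^{-b_n-2}$, and since the values $b_n$ on such rounds are distinct these sum to at most $\frac{\epsilon\cK_{n_0}}{1-\epsilon}$, so $\cK_n\le\cK_{n_0}/(1-\epsilon)=\cK_0$; and Skeptic cannot push the path out of the reduced event, since if $\sum v_n/g(A_n)=\infty$ and Skeptic kept $x_n=0$ from some point on, then $b_n$ would be bounded while $c_n\to\infty$, giving $d_n\ge\delta/g(A_n)$ and $\cK_n\le\cK_N-\delta\sum_{k>N}v_k/g(A_k)\to-\infty$, while if $\sum v_n/g(A_n)<\infty$ then $c_n$ is bounded, so $d_n<0$ once $b_n$ is large and (as also $v_n<g(A_n)$ eventually) $x_n=0$ eventually.

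The step I expect to be the main obstacle is the deduction of non-convergence of $S_n/\beta_n$ (with $S_n=\sum_{k\le n}x_k$) from ``$x_n\ne0$ infinitely often''. By construction $|x_n|\ge\beta_n$ whenever $x_n\ne0$, so $|x_n|/\beta_n\ge1$ for infinitely many $n$; if $S_n/\beta_n$ converged to some $L$, then along those $n$
\[
\frac{x_n}{\beta_n}=\frac{S_n}{\beta_n}-\frac{\beta_{n-1}}{\beta_n}\cdot\frac{S_{n-1}}{\beta_{n-1}}\longrightarrow L-L=0,
\]
a contradiction, provided $\beta_{n-1}/\beta_n\to1$. Unlike the case $\beta_n=n$ of Theorem~\ref{th:SLLN-comply}, where this is automatic, here it is exactly the point at which the restriction $h(t)=t^r$ with $1\le r\le2$ (which makes $\beta_n=g(A_n)^{1/r}$ and keeps $h^{-1}$ monotone with full range) and the regularity hypotheses on $g$ and $A_n$ from Section~5 of \cite{Miyabe_convrandseries} (cf.\ Remark~5.3 there) are needed; verifying that these hypotheses indeed force $\beta_{n-1}/\beta_n\to1$ is the only genuinely new ingredient. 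Everything else --- non-negativity of $\cK_n$ for $n\le n_0$, the case $\cK_{n_0}=0$, and the case $n_0=\infty$ --- transcribes almost verbatim from the proof of Theorem~\ref{th:SLLN-comply}.
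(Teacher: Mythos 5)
There is a genuine gap, and it sits exactly where you flagged it. First, note that the paper does not prove this statement directly: it derives it as an immediate corollary of Theorem \ref{th:strong-compliance} (for $h(x)=|x|^r$ with $1\le r\le2$ the assumptions (A0)--(A2), (A4) hold, and strong compliance with the biconditional \eqref{eq:comply} implies strong compliance with the one-directional event stated here). Your plan reproduces the architecture of the proof of Theorem \ref{th:strong-compliance}, but it omits the one device that makes the final deduction work: the sequence $\{\epsilon_n\}$ of Lemma \ref{lem:epsilon}. With your move sizes, a nonzero move only guarantees $|x_n|/\beta_n\ge1$, and that is not enough to rule out convergence of $S_n/\beta_n$. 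Your proposed rescue, $\beta_{n-1}/\beta_n\to1$, is simply not available: $g$ is only assumed positive and increasing, and Forecaster controls $v_n$ and hence $A_n$, so $g(A_n)$ can jump by arbitrarily large factors at every round. Concretely, if $\beta_n=g(A_n)^{1/r}$ doubles each round and Reality plays $x_n=\beta_n$ for all $n$, then $S_n/\beta_n\to2$ converges even though $x_n\ne0$ for every $n$; so your reduced event ``$x_n\ne0$ infinitely often'' does not imply non-convergence. There are no hidden regularity hypotheses on $g$ or $A_n$ in the theorem to appeal to.

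The paper closes this gap by inflating the nonzero moves to $e_n=h^{-1}(g(A_n)\cdot\epsilon_n^{-1})$, where $\epsilon_n$ comes from Lemma \ref{lem:epsilon} applied to $a_n=v_n/g(A_n)$ (and $c_n$ is accordingly built from $\sum_k\epsilon_k v_k/g(A_k)$ rather than $\sum_k v_k/g(A_k)$). In the relevant case $\sum_n v_n/g(A_n)=\infty$ one has $\epsilon_n\to0$, and assumption (A2) gives
\[
\frac{|x_n|}{h^{-1}(g(A_n))}\ \ge\ \frac{h^{-1}(g(A_n)\cdot\epsilon_n^{-1})}{h^{-1}(g(A_n))}\ \ge\ \epsilon_n^{-1/2}\ \longrightarrow\ \infty
\]
along the infinitely many nonzero rounds; Lemma \ref{lem:term-bound} then forbids convergence of $S_n/h^{-1}(g(A_n))$ regardless of how fast $\beta_n$ grows. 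The rest of your argument (the capital bounds, the $b_n$/$c_n$ bookkeeping, the $\epsilon$-trick at round $n_0$, and the forcing of the reduced Borel--Cantelli event) does transcribe correctly from Theorem \ref{th:SLLN-comply}, but without the $\epsilon_n$-inflation the bridge from the reduced event to the stated event fails.
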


We consider a slightly weaker condition
by replacing (A3) by (A4) below,
but (A4) is cricital to show the strong compliance.
See Remark \ref{rem:a4} for details.

\bigskip
\noindent
\textbf{Assumption}
\begin{enumerate}
\item[(A4)] $h(0)=0$.
\end{enumerate}

From now on we show the following theorem.

\begin{theorem}\label{th:strong-compliance}
Suppose that $h$ satisfies \textrm{(A0)-(A2), (A4)} and
that $g$ is a positive increasing function.
Then in UFGH, Reality can strongly comply with
\begin{align}\label{eq:comply}
\sum_n\frac{v_n}{g(A_n)}<\infty\iff
\frac{\sum_{k=1}^n(x_k-m_k)}{h^{-1}\circ g(A_n)}\mbox{ converges.}
\end{align}
\end{theorem}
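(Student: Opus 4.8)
The plan is to follow the same three-step derandomization scheme used in Section~\ref{sec:BC} and in Theorem~\ref{th:SLLN-comply}, transplanting it to the general hedge $h$ and the general normalizing function $g$. First I would reduce to the normalized situation: assume $m_n=0$ for all $n$ (this is harmless by the translation $x_n\mapsto x_n-m_n$), and discard rounds with $v_n=0$ by declaring $x_n=0$ there, so that $v_n>0$ throughout. Writing $A_n=\sum_{k=1}^n v_k$ and abbreviating $u_n=g(A_n)$, the target event \eqref{eq:comply} says $\sum_n v_n/u_n<\infty$ iff $\sum_{k\le n} x_k/h^{-1}(u_k)$ converges. As in the UFG case, I would restrict Reality's moves to the Kolmogorov-type alphabet: when $v_n$ is ``small'' relative to $u_n$ (the precise threshold being $v_n< h^{-1}(u_n)\cdot(\text{something})$, chosen so that $h(x_n-m_n)$ stays comparable to $u_n$ when $x_n\ne0$), Reality plays $x_n\in\{0,\pm h^{-1}(u_n)\}$; when $v_n$ is ``large'', Reality plays $x_n=\pm h^{-1}(v_n)$, which makes $h(x_n)-v_n=0$ identically and kills the $V_n$-term. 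With this restriction the surviving random event becomes the Borel--Cantelli-type dichotomy ``$\sum_n v_n/u_n<\infty$ iff $x_n=0$ for all but finitely many $n$'', exactly the shape handled in Section~\ref{sec:BC}.

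For Step~II I would build the fictional Skeptic strategy $F$ that forces this dichotomy, by adapting Lemmas~\ref{lem:BC-infinite} and~\ref{lem:BC-finite}. Let $b_n=\#\{k<n: x_k\ne0\}$ count the nonzero moves so far, and let $c_n$ be the integer with $c_n-1\le \sum_{k=1}^n v_k/u_k < c_n$; the summability-side estimate $\sum_{k:c_k=i} v_k/u_k<2$ replaces \eqref{eq:cn}'s estimate, using only that $g$ is increasing so that $v_k/u_k$ is the increment of a quantity whose integer-part tracks $c_k$. The forcing strategy is $M_n=0$, $V_n = (2^{-c_n-2}-2^{-b_n-2})/u_n$ on the small-$v_n$ rounds (and $V_n=0$, $M_n=\pm$ something on the large rounds, or simply $V_n=0$ there since those rounds contribute nothing): on the ``$\sum v_n/u_n=\infty$ but finitely many heads'' branch the $-2^{-c_n-2}/u_n\cdot(-v_n)$ terms sum to $+\infty$, and on the ``$\sum v_n/u_n<\infty$ but infinitely many heads'' branch the $+2^{-b_n-2}/u_n$ terms on heads give capital growth while the tail losses are summable because $p_n$-analogue $v_n/u_n\to0$. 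Non-negativity follows from $\sum_{k\in H}2^{-b_k-1}\le1$ and $\sum_{k\in T}2^{-c_k-1}v_k/u_k\le1$, exactly as in \eqref{eq:Hn-sum} and \eqref{eq:pn}.

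Step~III is then the mechanical transfer: Reality waits for the first round $n_0$ with $(M_n,V_n)\ne(0,0)$, uses that round to drop the capital strictly below $\cK_0$ (choosing $x_{n_0}$ to oppose the sign of $M_{n_0}$ when $V_{n_0}=0$, and $x_{n_0}=0$ when $V_{n_0}>0$, so $\cK_{n_0}-\cK_{n_0-1}=-|M_{n_0}|<0$ or $-V_{n_0}v_{n_0}<0$), sets $\epsilon=1-\cK_{n_0}/\cK_0$, and thereafter plays to keep the convex combination $(1-\epsilon)S+\epsilon F$ non-increasing; this produces the explicit threshold
\[
d_n=\frac{\epsilon\cK_{n_0}}{1-\epsilon}\cdot\frac{2^{-b_n-2}-2^{-c_n-2}}{u_n},
\]
and Reality compares $V_n$ against $d_n$ and $M_n$ against $0$ just as in Theorem~\ref{th:SLLN-comply}. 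The three verifications — (1) if $\sum v_n/u_n=\infty$ and only finitely many heads then Skeptic's capital $\to-\infty$, (2) if $\sum v_n/u_n<\infty$ and infinitely many heads then again $\to-\infty$, (3) $\sup_n\cK_n\le\cK_0$ via the two geometric bounds — go through as before.

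The main obstacle I anticipate is not in the Borel--Cantelli skeleton but in cleanly handling the hedge $h$ under the weakened hypothesis: with only (A0)--(A2),(A4) rather than (A3), one no longer has $h(x)=x^2$ near $0$, so the inequality $\cK_n-\cK_{n-1}\le V_n(h(x_n)-v_n)\le V_n h(x_n)$ on a ``head'' round must be controlled purely through $h(h^{-1}(u_n))=u_n$ — which is fine — but the threshold separating small-$v_n$ from large-$v_n$ rounds has to be defined so that (a) on small rounds $h^{-1}(u_n)$ is a legal, finite move and $h(h^{-1}(u_n))-v_n$ is bounded by a controllable multiple of $u_n$, and (b) on large rounds $h^{-1}(v_n)$ exists, which needs surjectivity/monotonicity of $h$ on $[0,\infty)$ as furnished by (A0)--(A2). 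I expect (A1) (so $h(x)/x$ increasing, giving $h^{-1}\circ g$ the right growth for the normalization) and (A2) (so $h(x)/x^2$ decreasing, controlling the variance term) to be exactly what is needed to push the two divergence arguments through; reconciling the bookkeeping of $c_n$ with the general $g$, and checking that $b_n\to\infty$ forces $h^{-1}(u_n)\to\infty$ fast enough that Reality's restricted moves remain consistent with the as. property, is the place where care is required. The role of (A4) is precisely to guarantee $h(0)=0$ so that the round-$n_0$ move $x_{n_0}=0$ (when $V_{n_0}>0$) gives $h(x_{n_0})-v_{n_0}=-v_{n_0}<0$, securing the strict capital drop that underlies strong compliance — without it the waiting trick fails, which is the content of the forthcoming Remark~\ref{rem:a4}.
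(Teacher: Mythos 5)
Your three-step skeleton (restrict to a Kolmogorov-type alphabet, build the fictional forcing strategy from the Borel--Cantelli lemmas, then run the waiting/$\epsilon$-mixture trick) is exactly the paper's, and the bookkeeping with $b_n$, $c_n$, $n_0$, $d_n$ is transplanted correctly. But there is a genuine gap at the point you yourself flag as ``the place where care is required'': with nonzero moves of size $e_n=h^{-1}(g(A_n))$, the reduction from the dichotomy ``$\sum_n v_n/g(A_n)<\infty$ iff $x_n=0$ for all but finitely many $n$'' to the target event \eqref{eq:comply} fails on the divergence side. The target asks for non-convergence of $\sum_{k\le n}x_k/h^{-1}\circ g(A_n)$ to \emph{any} limit (not just to $0$, as in the UFG case), and Lemma \ref{lem:term-bound} only forbids convergence when $|x_n|/h^{-1}\circ g(A_n)$ eventually exceeds every constant; with your moves this ratio equals $1$. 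Concretely, take $h(x)=|x|$, $g(t)=t$, and let Forecaster make $A_n$ jump by a factor of $4$ at a sparse set of rounds and be essentially flat otherwise: then $\sum_n v_n/g(A_n)=\infty$, your strategy (in the $n_0=\infty$ branch) plays $x_n=+g(A_n)$ exactly at the jump rounds, and $\sum_{k\le n}x_k/g(A_n)\to 4/3$ converges — so Reality does not comply.

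The paper's fix is the one ingredient your proposal omits: the amplifying sequence $\{\epsilon_n\}$ of Lemma \ref{lem:epsilon}, applied to $a_n=v_n/g(A_n)$. The counter $c_n$ tracks $\sum_{k\le n}\epsilon_k v_k/g(A_k)$ (which still diverges when $\sum_k v_k/g(A_k)$ does, by (iii)), the small/large threshold becomes $\epsilon_n v_n<g(A_n)$, and the nonzero move is $e_n=h^{-1}(g(A_n)\cdot\epsilon_n^{-1})$ with $d_n$ carrying $g(A_n)\cdot\epsilon_n^{-1}$ in its denominator. On the divergence side $\epsilon_n\to 0$, and Assumption (A2) gives $e_n/h^{-1}\circ g(A_n)\ge\epsilon_n^{-1/2}\to\infty$, which is what lets Lemma \ref{lem:term-bound} rule out convergence; on the summable side property (iv) keeps $\epsilon_n$ bounded away from $0$, so the capital estimates (via $\epsilon_k v_k/g(A_k)\le 1$ and $h(e_n)=g(A_n)\epsilon_n^{-1}$) survive unchanged. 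Your reading of the roles of (A0)--(A2) and (A4) is otherwise accurate, but without this amplification the divergence half of the equivalence, and hence the theorem, does not follow.
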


Note that Theorem \ref{th:strong-compliance}
implies the two theorems above.
Theorem \ref{th:strong-compliance} also implies
Proposition 2.1 in \cite{KumTakTak07}
by letting $m_n=0$, $v_n=v$, $h(x)=x^r$ and $g(x)=x/v$.
Furthermore, Theorem \ref{th:strong-compliance} also implies
Proposition 3.1 in \cite{KumTakTak07}
by letting $m_n=0$, $v_n=v$ and $g(x)=h(vx)$.

In what follows, without loss of generality,
we assume that $m_n=0$ for every $n$.
Furthermore, we can forget the round $n$ such that $v_n=0$
by letting $x_n=0$.
Thus, we assume that $v_n>0$ for every $n$.

Before giving the strategy, we recall the following lemma.

\begin{lemma}[Miyabe and Takemura {\cite[Lemma 4.15]{Miyabe_convrandseries}}]
\label{lem:epsilon}
Let $\{a_n\}$ be a sequence of positive reals.
Then there exists a sequence $\{\epsilon_n\}$ of positive reals such that
\begin{enumerate}
\item $\epsilon_n$ is determined only by $a_1,\cdots,a_n$,
\item $\epsilon_n a_n\le1$,
\item $\sum_n a_n=\infty$ implies $\sum_n \epsilon_n a_n=\infty$ and
$\epsilon_n\to0$.
\end{enumerate}
\end{lemma}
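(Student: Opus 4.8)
The plan is to exhibit one explicit choice of $\epsilon_n$ built from the partial sums of $\{a_n\}$, and then verify the three properties in turn. Write $S_n=\sum_{k=1}^n a_k$ (with $S_0=0$), so that $S_n>0$ for every $n$ because the $a_k$ are positive, and set
\[
\epsilon_n=\frac{1}{S_n}.
\]
Since $S_n$ is a function of $a_1,\dots,a_n$ alone, property (i) is immediate, and $\epsilon_n>0$ throughout. Property (ii) is equally direct: as $a_n\le a_1+\cdots+a_n=S_n$, we get $\epsilon_n a_n=a_n/S_n\le 1$.

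Next I would dispose of the easy half of (iii). Assume $\sum_n a_n=\infty$. Then the nondecreasing sequence $\{S_n\}$ is unbounded, so $S_n\to\infty$ and hence $\epsilon_n=1/S_n\to 0$, which gives the convergence claim in (iii).

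The remaining and only substantial point is the divergence of $\sum_n\epsilon_n a_n=\sum_n a_n/S_n$; this is the Abel--Dini theorem, and it is where the real work lies. I would prove it directly from the Cauchy criterion rather than quoting it. Fix any $N$. For $N<k\le m$ we have $S_k\le S_m$, hence
\[
\sum_{k=N+1}^m\frac{a_k}{S_k}\ge\frac{1}{S_m}\sum_{k=N+1}^m a_k=\frac{S_m-S_N}{S_m}=1-\frac{S_N}{S_m}.
\]
Because $S_m\to\infty$, the right-hand side tends to $1$ as $m\to\infty$, so for each $N$ there is some $m>N$ with $\sum_{k=N+1}^m a_k/S_k\ge 1/2$. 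The partial sums of $\sum_n a_n/S_n$ therefore fail the Cauchy condition, so the series diverges, completing (iii).

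The main obstacle is precisely this divergence step. The naive telescoping comparison $a_n/S_n\le\log S_n-\log S_{n-1}$ (which follows from $-\log(1-x)\ge x$) runs the wrong way: it is an upper bound, and its sum is itself infinite, so it yields no information. The block estimate above sidesteps the difficulty by bounding each $S_k$ below by the common value $S_m$ inside a single Cauchy block, and I expect this to be the cleanest route. Finally I would note that the construction behaves harmlessly in the convergent case: if $\sum_n a_n<\infty$ then $S_n\to S_\infty<\infty$ and $\epsilon_n\to 1/S_\infty>0$, so (i) and (ii) still hold while (iii) imposes no requirement, and the proof is complete.
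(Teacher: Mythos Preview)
Your proof is correct. The paper does not actually supply its own argument for this lemma; it merely cites the original source \cite{Miyabe_convrandseries} and then adds the remark that, by inspection of that proof, one may also assume property (iv): if $\sum_n a_n<\infty$ then $\epsilon_n$ converges to a positive real. Your choice $\epsilon_n=1/S_n$ with $S_n=\sum_{k\le n}a_k$ is the classical Abel--Dini construction, and your block estimate for the divergence of $\sum_n a_n/S_n$ is the standard clean way to handle it. As you note in your final paragraph, this same construction automatically yields (iv), so your argument in fact delivers everything the paper subsequently needs from the lemma.
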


Furthermore, by the proof, we can assume that
\begin{enumerate}
\item[(iv)] $\sum_n a_n<\infty$ implies that $\{\epsilon_n\}$ converges to a positive real.
\end{enumerate}

Now we are ready to give the strategy.

In UFGH, we consider the following strategy of Reality:
\begin{quotation}
Let 
\[b_n=\#\{k<n\ :\ x_k\ne0\}\]
and $c_n$ be the natural number satisfying
\[c_n-1\le\sum_{k=1}^n\frac{\epsilon_k v_k}{g(A_k)}<c_n\]
where $\{\epsilon_n\}$ is the sequence determined
by Lemma \ref{lem:epsilon} for $\{v_n/g(A_n)\}$.
Reality waits for the first round $n_0$ such that
Skeptic announces $(M_n,V_n)\ne(0,0)$.
If such a round does not exist, $n_0=\infty$.

For $n<n_0$ including the case $n_0=\infty$,
Reality announces $x_n$ as
\[x_n=\begin{cases}
e_n&\mbox{ if }c_n\ne c_{n-1}\\
0&\mbox{ if }c_n=c_{n-1}.\end{cases}\]
where
\[e_n=h^{-1}(g(A_n)\cdot \epsilon_n^{-1}).\]

For $n=n_0$,
Reality announces $x_n$ as
\[x_n=\begin{cases}
1&\mbox{ if }V_n=0\mbox{ and }M_n<0\\
-1&\mbox{ if }V_n=0\mbox{ and }M_n>0\\
0&\mbox{ if }V_n>0.
\end{cases}\]

If $\cK_{n_0}=0$,
then for $n>n_0$ Reality announces $x_n$ as
\[x_n=\begin{cases}
e_n&\mbox{ if }c_n\ne c_{n-1}\\
0&\mbox{ if }c_n=c_{n-1}.\end{cases}\]

If $\cK_{n_0}>0$,
let 
\[\epsilon=1-\frac{\cK_{n_0}}{\cK_0}.\]
For $n>n_0$ Reality announces $x_n$ as
\[x_n=
\begin{cases}
e_n&\mbox{ if }\epsilon_nv_n<g(A_n),\ V_n\le d_n\mbox{ and }M_n<0\\
-e_n&\mbox{ if }\epsilon_nv_n<g(A_n),\ V_n\le d_n\mbox{ and }M_n\ge0\\
0&\mbox{ if }\epsilon_nv_n<g(A_n),\ V_n> d_n\\
h^{-1}(v_n)&\mbox{ if }\epsilon_nv_n\ge g(A_n)\mbox{ and }M_n<0\\
-h^{-1}(v_n)&\mbox{ if }\epsilon_nv_n\ge g(A_n)\mbox{ and }M_n\ge0
\end{cases}\]
where
\[d_n=\frac{\epsilon\cK_{n_0}}{1-\epsilon}\frac{2^{-b_n-2}-2^{-c_n-2}}{g(A_n)\cdot\epsilon_n^{-1}}.\]
\end{quotation}

We show that this strategy strongly complies
with the success and the failure of SLLN at the same time.
In the proof we use the following lemma.

\begin{lemma}[Miyabe and Takemura {\cite[Lemma 4.14]{Miyabe_convrandseries}}]
\label{lem:term-bound}
Let $\{y_n\}$ be a sequence of reals and
let $\{g_n\}$ be a non-decreasing sequence of positive reals.
If $(\sum_{k\le n}y_k)/g_n$ converges to $d$,
then $|y_n/g_n|\le|d|+1$ for all but finitely many $n$.
\end{lemma}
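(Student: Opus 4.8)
The plan is to reduce everything to the partial sums and exploit that consecutive denominators differ only by a factor that monotonicity controls. Write $S_n=\sum_{k\le n}y_k$ (with $S_0=0$) and set $a_n=S_n/g_n$, so that the hypothesis is exactly $a_n\to d$. First I would observe that $y_n=S_n-S_{n-1}$, hence
\[
\frac{y_n}{g_n}=\frac{S_n}{g_n}-\frac{S_{n-1}}{g_n}=a_n-\frac{g_{n-1}}{g_n}\,a_{n-1}.
\]
The key structural point is that, since $\{g_n\}$ is non-decreasing and positive, the ratio $r_n:=g_{n-1}/g_n$ satisfies $0<r_n\le1$, so $a_{n-1}$ enters with a coefficient in $(0,1]$.

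The second step is an exact algebraic decomposition that isolates the limit $d$. I would rewrite
\[
a_n-r_n a_{n-1}=(a_n-d)-r_n(a_{n-1}-d)+d(1-r_n),
\]
which one verifies by expanding the right-hand side. Taking absolute values and using $r_n\le 1$ together with $0\le 1-r_n<1$ yields
\[
\left|\frac{y_n}{g_n}\right|\le |a_n-d|+|a_{n-1}-d|+|d|.
\]

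Finally, since $a_n\to d$, both $|a_n-d|$ and $|a_{n-1}-d|$ tend to $0$; choosing $N$ so that each is below $1/2$ for all $n\ge N$ forces $|y_n/g_n|<|d|+1$ for every $n\ge N$, which is the assertion. In fact the same estimate gives the slightly stronger conclusion $\limsup_n|y_n/g_n|\le|d|$, and the constant $1$ is merely convenient slack. There is no substantial obstacle here: the only point requiring care is the denominator shift from $g_n$ to $g_{n-1}$, and that is precisely where the monotonicity of $\{g_n\}$ is used, to bound $r_n\le 1$ and $1-r_n<1$ in the step above. Boundary indices (such as $n=1$, or any finite initial segment) are immaterial, as the conclusion is only claimed for all but finitely many $n$.
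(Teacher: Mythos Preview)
The paper does not prove this lemma; it only cites it from \cite[Lemma~4.14]{Miyabe_convrandseries} and uses it as a black box in the proof of Theorem~\ref{th:strong-compliance}. There is therefore no proof in the present paper to compare against.

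Your argument is correct. The identity
\[
\frac{y_n}{g_n}=a_n-r_n a_{n-1}=(a_n-d)-r_n(a_{n-1}-d)+d(1-r_n),
\]
with $r_n=g_{n-1}/g_n\in(0,1]$ by monotonicity and positivity of $\{g_n\}$, is exactly the right decomposition, and the bound $|y_n/g_n|\le|a_n-d|+|a_{n-1}-d|+|d|$ follows immediately from $r_n\le1$ and $0\le1-r_n\le1$. The conclusion then drops out of $a_n\to d$. Your additional observation that in fact $\limsup_n|y_n/g_n|\le|d|$ is also correct and a nice sharpening of the stated bound.
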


\begin{proof}[Proof of Theorem \ref{th:strong-compliance}]
By a similar argument as the proof of Theorem \ref{th:SLLN-comply},
we can show that Reality can strongly comply with
\begin{align*}
\sum_n\frac{v_n}{g(A_n)}<\infty
\iff \sum_n\frac{\epsilon_n v_n}{g(A_n)}<\infty
\iff x_n=0\mbox{ for all but finitely many }n
\end{align*}
except the following two places.

(1) The capital of the case such that $\sum_n v_n/g(A_n)=\infty$
and $x_n=0$ for all but finitely many $n$ is as follows:
\begin{align*}
\cK_n
=&\cK_{N}-\sum_{k=N+1}^n V_kv_k
\le\cK_{N}-\sum_{k=N+1}^n d_kv_k\\
\le&\cK_{N}-\sum_{k=N+1}^n \delta\cdot\frac{\epsilon_k v_k}{g(A_k)}\to-\infty.
\end{align*}
(2) For $n\ge n_0$ such that $x_n\ne0$, we have
\begin{align*}
\cK_n-\cK_{n-1}
\le& V_n(h(e_n)-v_n)
\le d_n\cdot h(e_n)\\
\le&\frac{\epsilon \cK_{n_0}}{1-\epsilon}\cdot\frac{2^{-b_n-2}}{g(A_n)\cdot\epsilon_n^{-1}}\cdot g(A_n)\cdot\epsilon_n^{-1}
=\frac{\epsilon \cK_{n_0}}{1-\epsilon}\cdot2^{-b_n-2}
\end{align*}
if $\epsilon_n v_n<g(A_n)$, and
$\cK_n-\cK_{n-1}\le0$ if $\epsilon_n v_n\ge g(A_n)$. 

From now on, we show that
$x_n=0$ for all but finitely many $n$
if and only if
$\sum_{k=1}^n x_k/h^{-1}\circ g(A_n)$ converges.

Suppose that $x_n=0$ for all but finitely many $n$
and $\lim_n A_n<\infty$.
Then, $\sum_n x_n$ converges and $h^{-1}\circ g(A_n)$ converges.
Thus, $\sum_{k=1}^n x_k/h^{-1}\circ g(A_n)$ converges.

Suppose that $x_n=0$ for all but finitely many $n$
and $\lim_n A_n=\infty$.
Then, $\sum_n x_n$ converges and $\lim_n h^{-1}\circ g(A_n)=\infty$.
Thus, $\sum_{k=1}^n x_k/h^{-1}\circ g(A_n)$ converges to $0$.

Suppose that $x_n\ne0$ for infinitely many $n$.
This means that
\[|x_n|=\begin{cases}e_n&\mbox{ if }\epsilon_n v_n<g(A_n)\\
h^{-1}(v_n)&\mbox{ if }\epsilon_n v_n\ge g(A_n)\end{cases}\]
for infinitely many $n$.
Note that, if $\epsilon_n v_n\ge g(A_n)$, then
$h^{-1}(v_n)\ge h^{-1}(g(A_n)\cdot\epsilon_n^{-1})$
by the monotonicity of $h$.
Thus,
\[\frac{|x_n|}{h^{-1}\circ g(A_n)}
\ge\frac{h^{-1}(g(A_n)\cdot \epsilon_n^{-1})}{h^{-1}\circ g(A_n)}\]
for infinitely many $n$.
We claim that the right-hand side goes to infinity.
Then, by Lemma \ref{lem:term-bound},
$\sum_{k=1}^nx_k/h^{-1}\circ g(A_n)$ does not converge.

Since $\epsilon_n\to0$ as $n\to\infty$, we have
\[g(A_n)\le g(A_n)\cdot\epsilon_n^{-1}\]
for all sufficiently large $n$.
Since $h$ is non-decreasing, so is $h^{-1}$ and
\[h^{-1}(g(A_n))\le h^{-1}(g(A_n)\cdot\epsilon_n^{-1}).\]
By Assumption (A2), we have
\[\frac{g(A_n)}{(h^{-1}(g(A_n)))^2}\ge
\frac{g(A_n)\cdot\epsilon_n^{-1}}{(h^{-1}(g(A_n)\cdot\epsilon_n^{-1}))^2},\]
which implies that
\[\frac{h^{-1}(g(A_n)\cdot \epsilon_n^{-1})}{h^{-1}\circ g(A_n)}
\ge\epsilon_n^{-1/2}\to\infty.\]
\end{proof}

\begin{remark}\label{rem:a4}
Notice that Assumption \textrm{(A4)} is used to show that $\cK_{n_0}<\cK_0$.
At the round $n=n_0$, if $V_n>0$, then $x_n=0$ and
\[\cK_{n_0}-\cK_{n_0-1}=V_n(h(0)-v_n),\]
which is negative because $h(0)=0$.
\end{remark}

\section{On the notion of Compliance}\label{sec:compliance}
\begin{note}
\marginpar{title? How about this?}
\end{note}

In this section we give a general theory on compliance.
We consider the unbounded forecasting game or the coin-tossing game,
but most theorems can be applied to a similar game.

\subsection{The strength of compliance}
\begin{note}
\marginpar{title? How about this?}
\end{note}

Recall Theorem \ref{th:force-comply}, which says that
Reality can strongly comply with the event that Skeptic can force.
An interpretation of this fact in the usual notion of probability is like this:
For each event with probability $1$, one can deterministically take
a path in the event.
Clearly, the probability of the event is closely related
to the supremum of the capital of Skeptic.
The strongness of the compliance seems to be due to probability $1$ of the event.
With this motivation we study the relation between the notion of compliance
and the upper and lower probability.



We denote strategies of Forecaster, Skeptic and Reality by $F$, $S$ and $R$ respectively.

\begin{definition}[Shafer and Vovk {\cite[Chapter 8.3]{ShaVov01}}]
The upper probability of an event $E$ is defined as
\[\overline{P}(E)=\inf\{a\ |\ (\exists S)(\forall F)(\forall R)\mathcal{K}_0=a\ \&\ 
E\Rightarrow\sup_n \mathcal{K}_n\ge1\},\]
where $S$ needs to keep the capital non-negative.
The lower probability is defined as
\[\underline{P}(E)=1-\overline{P}(E^c).\]
\end{definition}

The following are some properties of $\overline{P}$ and $\underline{P}$.

\begin{proposition}[see {\cite[Proposition 8.12]{ShaVov01}}]\label{pro:force-P}
Skeptic can force an event $E$ if and only if $\underline{P}(E)=1$.
\end{proposition}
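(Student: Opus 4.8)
The plan is to prove both directions by unwinding the two definitions involved, namely ``Skeptic can force $E$'' and ``$\underline P(E)=1$'', the latter meaning $\overline P(E^c)=0$, i.e. $\inf\{a : (\exists S)(\forall F)(\forall R)\ \cK_0=a\ \&\ E^c\Rightarrow\sup_n\cK_n\ge1\}=0$. Since the defining set of reals $a$ is clearly closed upward and contains no negative numbers (a non-negative capital starting at $a<0$ cannot exist), showing the infimum is $0$ amounts to showing that for every $\varepsilon>0$ there is a non-negative strategy $S$ of Skeptic with $\cK_0=\varepsilon$ that, whenever the path lies in $E^c$, drives $\sup_n\cK_n\ge1$; equivalently, by scaling, that there is a non-negative strategy with $\cK_0=1$ that drives $\sup_n\cK_n\ge 1/\varepsilon$ on $E^c$, i.e. $\sup_n\cK_n=\infty$ on $E^c$ after taking $\varepsilon\to 0$. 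This last statement, together with the remark in the excerpt that weak forcing and forcing coincide (\cite[Lemma 3.1]{ShaVov01}), is exactly ``Skeptic can force $E^c$''\,---\,wait, it is ``Skeptic can force $E$'' once we remember that in the definition of forcing the bad set is $\xi\notin E$. So the two statements will line up.

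More concretely, I would argue the two implications as follows. ($\Rightarrow$) Suppose Skeptic can force $E$ via a non-negative strategy $P$ with $\cK_0^P=1$ such that $\xi\notin E$ implies $\limsup_n\cK_n^P(\xi)=\infty$. Fix $\varepsilon>0$ and consider the scaled strategy $\varepsilon P$, which is non-negative, has initial capital $\varepsilon$, and on every $\xi\in E^c$ has $\limsup_n \varepsilon\cK_n^P(\xi)=\infty\ge 1$; in particular $\sup_n(\varepsilon\cK_n^P)\ge 1$. Hence $\varepsilon$ belongs to the set defining $\overline P(E^c)$, and since $\varepsilon>0$ was arbitrary, $\overline P(E^c)=0$, i.e. $\underline P(E)=1$. ($\Leftarrow$) Suppose $\underline P(E)=1$, i.e. $\overline P(E^c)=0$. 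Then for each $k\ge 1$ there is a non-negative strategy $S_k$ with $\cK_0^{S_k}=2^{-k}$ such that $\xi\in E^c$ implies $\sup_n\cK_n^{S_k}(\xi)\ge1$. Let $S=\sum_{k\ge1}S_k$; this is a non-negative strategy with $\cK_0^S=\sum_k 2^{-k}=1$, and for every $\xi\in E^c$ and every $k$ we have $\sup_n\cK_n^S(\xi)\ge\sup_n\cK_n^{S_k}(\xi)\ge 1$, so by choosing $k$ arbitrarily large this would not yet give $\infty$; instead, use the standard trick of running $S_k$ only after the capital accumulated so far exceeds some threshold, or simply note that $\sup_n \cK_n^{S_k}(\xi)\ge 1$ for each $k$ forces $\sup_n\cK_n^S(\xi)=\infty$ because the $\cK^{S_k}$ are non-negative and each contributes at least its own eventual supremum. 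This shows $S$ weakly forces $E$, hence (by \cite[Lemma 3.1]{ShaVov01}) Skeptic can force $E$.

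The main obstacle is the superposition step in the ($\Leftarrow$) direction: combining countably many strategies into one non-negative strategy with finite initial capital and then arguing that the combined capital actually tends to infinity (not merely stays $\ge 1$) on $E^c$. The clean way is to define $S=\sum_k S_k$ with $\cK_0^{S_k}=2^{-k}$, observe non-negativity and $\cK_0^S=1$ term by term, and then on a path $\xi\in E^c$ note that for each $N$ there are indices $n_1,\dots,n_N$ (one for each of $S_1,\dots,S_N$, and we may take them equal to a common large $n$ by monotonicity of the partial sums of non-negative contributions isn't available, so instead argue componentwise: $\cK_n^S\ge \cK_n^{S_k}$ for all $k,n$, and pass to $\sup_n$) so that $\sup_n\cK_n^S(\xi)\ge N$ for every $N$. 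Alternatively, and perhaps more transparently, cite Proposition 8.12 of \cite{ShaVov01} for the inequality direction and only spell out the easy direction; but since the excerpt labels this ``see \cite[Proposition 8.12]{ShaVov01},'' I would give the short self-contained argument above, flagging the countable-combination lemma as the one routine point that needs care.
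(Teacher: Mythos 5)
Your ($\Rightarrow$) direction is correct: scaling a forcing strategy by $\varepsilon$ shows $\overline{P}(E^c)\le\varepsilon$ for every $\varepsilon>0$, hence $\underline{P}(E)=1$. (The paper itself gives no argument beyond citing Shafer--Vovk and remarking that the non-trivial direction rests on countable combination of Skeptic's strategies, so the real question is whether your self-contained version of that combination works.)

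It does not, as written. In the ($\Leftarrow$) direction you take $S_k$ with $\cK_0^{S_k}=2^{-k}$ and $\sup_n\cK_n^{S_k}\ge1$ on $E^c$, set $S=\sum_k S_k$, and justify $\sup_n\cK_n^S=\infty$ by saying each $S_k$ ``contributes at least its own eventual supremum.'' That inference is false: the suprema of the individual capital processes may be attained at different rounds and then lost. For instance, if $\cK_n^{S_k}$ climbs to $1$ at round $k$ and then falls back to $2^{-k}$, each $S_k$ satisfies $\sup_n\cK_n^{S_k}\ge 1$, yet $\cK_n^S=\sum_k\cK_n^{S_k}\le 1+\sum_k 2^{-k}\le 2$ for every $n$. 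All your argument actually yields is $\sup_n\cK_n^S\ge 1$, which is not forcing. Two standard repairs: (a) replace each $S_k$ by the stopped strategy $S_k'$ that bets $0$ forever after the first round at which its capital reaches $1$; then on $E^c$ one has $\cK_n^{S_k'}\ge1$ for all large $n$, so $\liminf_n\sum_k\cK_n^{S_k'}=\infty$; or (b) keep the $S_k$ but weight them as $S=\sum_k a_kS_k$ with $a_k\to\infty$ and $\sum_k a_k2^{-k}<\infty$ (e.g.\ $a_k=2^{k/2}$), which gives $\sup_n\cK_n^S\ge a_k$ for every $k$; in either case normalize the initial capital and then invoke the weak-forcing/forcing equivalence of \cite[Lemma 3.1]{ShaVov01}. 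Your parenthetical mention of ``running $S_k$ only after the capital accumulated so far exceeds some threshold'' gestures at (a), but you then discard it in favour of the incorrect componentwise claim, so the step you yourself flagged as the only delicate one is exactly the one left unproved.
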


The ``only if'' direction holds because 
the convex combination of strategies  of Skeptic is possible in the game.

\begin{proposition}[see {\cite[Proposition 8.10]{ShaVov01}}]\label{pro:measure}
The upper probability $\overline{P}$ is an outer measure
and the lower probability $\underline{P}$ is an inner measure.
\end{proposition}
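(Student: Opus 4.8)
The plan is to verify for $\overline{P}$ the three defining properties of an outer measure --- $\overline{P}(\emptyset)=0$, monotonicity, and countable subadditivity --- and then to read off the inner-measure properties of $\underline{P}$ by conjugacy, using $\overline{P}(\Omega)=1$. Two of the outer-measure properties are immediate. For $\overline{P}(\emptyset)=0$ I would note that for every $a\ge 0$ the strategy in which Skeptic always announces the null move keeps $\mathcal{K}_n=a\ge 0$ and satisfies ``$\emptyset\Rightarrow\sup_n\mathcal{K}_n\ge 1$'' vacuously, so every $a\ge 0$ is admissible. For monotonicity, if $E\subseteq E'$ then any strategy witnessing $E'\Rightarrow\sup_n\mathcal{K}_n\ge 1$ with initial capital $a$ also witnesses $E\Rightarrow\sup_n\mathcal{K}_n\ge 1$, so $\overline{P}(E)\le\overline{P}(E')$.

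The substance is countable subadditivity. Given events $E_1,E_2,\dots$ with $s:=\sum_n\overline{P}(E_n)<\infty$ (the other case being trivial), I would fix $\varepsilon>0$ and, since each $\overline{P}(E_n)$ is an infimum, choose for each $n$ a strategy $S_n$ of Skeptic with $\mathcal{K}_0^{S_n}=\overline{P}(E_n)+\varepsilon 2^{-n}$ (the extra cash over a near-optimal initial capital simply being held in reserve), keeping the capital non-negative, with $E_n\Rightarrow\sup_k\mathcal{K}_k^{S_n}\ge 1$. Taking the superposition $S=\sum_n S_n$ --- a legal strategy because countable convex combinations of Skeptic's strategies are available in the game, as noted after Proposition~\ref{pro:force-P} --- one has $\mathcal{K}_0^{S}=s+\varepsilon$, the process $\mathcal{K}_k^{S}=\sum_n\mathcal{K}_k^{S_n}$ is non-negative, and on $\bigcup_n E_n$ some $E_m$ holds, whence $\sup_k\mathcal{K}_k^{S}\ge\sup_k\mathcal{K}_k^{S_m}\ge 1$ because the remaining summands are non-negative. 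Thus $\overline{P}(\bigcup_n E_n)\le s+\varepsilon$, and letting $\varepsilon\downarrow 0$ finishes this part.

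For $\underline{P}$ I would first establish $\overline{P}(\Omega)=1$: the null strategy from $\mathcal{K}_0=1$ gives ``$\le$'', and ``$\ge$'' holds because, against an adversarial Reality, Skeptic's per-round increment can always be driven $\le 0$ --- in the coin-tossing game and in the unbounded forecasting game, a non-trivial bet of Skeptic lets Reality pick the outcome that decreases the capital --- so a capital process starting below $1$ can never reach $1$ while staying non-negative. From $\overline{P}(\Omega)=1$ and the outer-measure properties one then gets $\underline{P}(\Omega)=1$, $\underline{P}(\emptyset)=0$, monotonicity of $\underline{P}$ by complementation, and $\underline{P}(E)\le\overline{P}(E)$ (since $\overline{P}(E)+\overline{P}(E^c)\ge\overline{P}(\Omega)=1$ by subadditivity). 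The remaining inner-measure property, superadditivity on a disjoint sequence $\underline{P}(\bigsqcup_n E_n)\ge\sum_n\underline{P}(E_n)$, is the conjugate of subadditivity of $\overline{P}$; for two disjoint events $A,B$ it reduces to the submodularity-type bound $\overline{P}((A\cup B)^c)\le\overline{P}(A^c)+\overline{P}(B^c)-1$, which I would obtain by superposing witnessing strategies for $\overline{P}(A^c)$ and $\overline{P}(B^c)$, arranged (harmlessly) to play the null move once their capital reaches $1$: their sum then reaches $1$ on $A^c\cup B^c=\Omega$ and $2$ on $A^c\cap B^c=(A\cup B)^c$, and capping this sum at the two levels supplies strategies for $(A\cup B)^c$ at the stated cost; the countable case then follows from the finite case by monotonicity.

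The one place I expect real work is the legitimacy of the infinite superpositions: because Skeptic must announce real numbers at each round, $\sum_n\mathcal{K}_k^{S_n}$ must converge on every path. In the coin-tossing game this is automatic, since $x_n-p_n\in[-1,1]$ forces $\mathcal{K}_k^{S_n}\le 2^k\,\mathcal{K}_0^{S_n}$ and hence $\sum_n\mathcal{K}_k^{S_n}\le 2^k(s+\varepsilon)<\infty$. In the unbounded forecasting game, where a single move of Reality can inflate the capital without bound, one has to be more careful --- for example, prove finite subadditivity first and pass to the countable case by a monotonicity (continuity-from-below) argument, or truncate the summation index --- and the same caveat attaches to the capping step behind submodularity, which is transparent in the coin-tossing game (Skeptic can bet moderately and not overshoot $1$) but needs the extra attention in the unbounded game.
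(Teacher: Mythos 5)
The paper does not prove Proposition \ref{pro:measure} at all --- it is quoted from Shafer and Vovk --- so there is no internal proof to compare against; your route is the standard one from their Chapter 8 (null strategy for $\emptyset$, monotonicity for free, countable superposition of near-optimal strategies for subadditivity, and $\overline{P}(\Omega)=1$ via Reality's ability to make every increment non-positive). That skeleton is right, and you correctly isolated the two real issues. But both of your patches have problems. First, the convergence of the superposition is \emph{not} automatic in the coin-tossing game: non-negativity of $\mathcal{K}^{S_n}$ only constrains $M_k^{S_n}$ to lie in $[-\mathcal{K}_{k-1}^{S_n}/(1-p_k),\ \mathcal{K}_{k-1}^{S_n}/p_k]$, so the one-step growth factor is $\max(1/p_k,1/(1-p_k))$, not $2$; your bound $\mathcal{K}_k^{S_n}\le 2^k\mathcal{K}_0^{S_n}$ holds only when every $p_i=1/2$, and when $p_k\in\{0,1\}$ there is no bound at all (with $p_k=0$, any huge positive $M_k$ keeps the capital non-negative). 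The salvageable version is that for $p_i\in(0,1)$ the growth factor depends only on Forecaster's moves and not on $n$, so $\sum_n\mathcal{K}_k^{S_n}$ converges path by path; the clean general fix (the one Shafer and Vovk effectively use) is to allow capital processes with values in $[0,\infty]$, noting that a path on which the combined capital is $+\infty$ satisfies $\sup_k\mathcal{K}_k\ge 1$ trivially. Your alternative fixes are weaker than you suggest: continuity from below for $\overline{P}$ is itself a hard (and in general unavailable) property, so deriving countable subadditivity from finite subadditivity that way would be circular.

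Second, the superadditivity step is both unnecessary and, as sketched, broken. It is unnecessary because, with $\underline{P}$ \emph{defined} as $\underline{P}(E)=1-\overline{P}(E^c)$ and $\overline{P}(\Omega)=1$ established, $\underline{P}$ is the inner measure induced by the outer measure $\overline{P}$ in the standard sense, which is all the proposition asserts. It is broken because the ``capping'' construction does not produce a legal strategy: the process $\mathcal{K}^{S_A}+\mathcal{K}^{S_B}-1$ must be the capital of a strategy started from $\overline{P}(A^c)+\overline{P}(B^c)-1+O(\varepsilon)$, and before the first time one of the two witnesses reaches capital $1$ the sum $\mathcal{K}^{S_A}+\mathcal{K}^{S_B}$ can drop arbitrarily close to $0$, so the shifted process violates Skeptic's collateral duty; stopping each witness at level $1$ does not help with this, and ``capping'' a capital process from above is not an operation available to Skeptic. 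If you want superadditivity you need a genuinely different argument (and note that submodularity-type inequalities generically fail for superhedging prices); if you do not need it, delete that paragraph and state explicitly which definition of inner measure you are using.
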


\begin{proposition}[see {\cite[Lemma 1]{TakVovSha11}}]\label{pro:P-larger}
For every event $E$, we have
\[0\le\underline{P}(E)\le\overline{P}(E)\le1.\]
\end{proposition}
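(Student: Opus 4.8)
The plan is to verify the three inequalities $\overline{P}(E)\le 1$, $\underline{P}(E)\ge 0$, and $\underline{P}(E)\le\overline{P}(E)$ in that order, the last being the only one that needs a real argument.

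For $\overline{P}(E)\le 1$ I would note that Skeptic's trivial strategy ($M_n=V_n=0$ in the unbounded forecasting game, $M_n=0$ in the coin-tossing game) started with capital $a=1$ keeps $\cK_n\equiv 1$, so it is non-negative and satisfies ``$E\Rightarrow\sup_n\cK_n\ge 1$'' for every play; hence $1$ belongs to the set whose infimum is $\overline{P}(E)$. Applying this to $E^c$ gives $\overline{P}(E^c)\le 1$, so $\underline{P}(E)=1-\overline{P}(E^c)\ge 0$.

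For $\underline{P}(E)\le\overline{P}(E)$, equivalently $\overline{P}(E)+\overline{P}(E^c)\ge 1$, I would argue by contradiction: if $\overline{P}(E)+\overline{P}(E^c)<1$ there are reals $a,b$ with $a+b<1$ together with a strategy $S$ of Skeptic witnessing that $a$ lies in the defining set for $E$ and a strategy $S'$ witnessing that $b$ lies in the defining set for $E^c$. Let Skeptic play the combined strategy $S+S'$ (the coordinatewise sum of the prescribed moves) from initial capital $a+b$. Since the capital update is affine in Skeptic's move, along any play $\cK_n^{S+S'}=\cK_n^{S}+\cK_n^{S'}$, so $S+S'$ keeps the capital non-negative; and since every path lies in $E$ or in $E^c$, at least one of $\sup_n\cK_n^{S}\ge 1$, $\sup_n\cK_n^{S'}\ge 1$ holds, whence $\sup_n\cK_n^{S+S'}\ge 1$ on every path (the other summand being non-negative). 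But against any fixed strategy of Skeptic — and any Forecaster — Reality can keep the capital non-increasing: in the coin-tossing game she announces $x_n=0$ when $M_n\ge 0$ and $x_n=1$ when $M_n<0$, and in the unbounded forecasting game she announces $x_n=m_n$, making the increment $-V_nv_n\le 0$. Such a play forces $\sup_n\cK_n^{S+S'}\le a+b<1$, a contradiction. Hence $\overline{P}(E)+\overline{P}(E^c)\ge 1$, and combining the three facts yields $0\le\underline{P}(E)\le\overline{P}(E)\le 1$.

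The one delicate point is the combined-strategy step: I would check that $S+S'$ is a legal strategy (it is, since the protocols impose no upper bound on $M_n$ and $V_n^{S}+V_n^{S'}\ge 0$) and that its capital process is literally the pointwise sum of the two individual ones, which is immediate from the affine update rule. Everything else is bookkeeping, and no heavier tool — in particular not Martin's theorem — is required.
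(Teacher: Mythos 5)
Your proof is correct. The paper does not prove this proposition itself (it only cites Lemma~1 of Takemura--Vovk--Shafer), but your argument --- the trivial strategy to get $\overline{P}(E)\le 1$, and then coherence (Reality keeping the capital non-increasing against any fixed Skeptic strategy) combined with additivity of capital processes to rule out $\overline{P}(E)+\overline{P}(E^c)<1$ --- is exactly the standard argument underlying that reference, and every step checks out.
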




We define a similar function based on the notion of compliance.

\begin{definition}
For an event $E$, let
\[\overline{Q}(E)=
\sup\{a\ |\ (\exists R)(\forall F)(\forall S)
\mathcal{K}_0=a\Rightarrow E\wedge\sup_n \mathcal{K}_n\le1\}.\]
Let
\[\underline{Q}(E)=1-\overline{Q}(E^c).\]
\end{definition}

The following are immediate by definition.

\begin{proposition}
If Reality can strongly comply with $E$, then $\overline{Q}(E)=1$.
\end{proposition}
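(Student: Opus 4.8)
The plan is to unwind the definitions of $\overline{Q}$ and of strong compliance and observe that they match up almost verbatim. Recall that Reality strongly complies with $E$ by a strategy $R$ if, for every play consistent with Skeptic's collateral duty, the path lies in $E$ and $\cK_n \le \cK_0$ for all $n$; in particular, starting from $\cK_0 = 1$ this gives $\sup_n \cK_n \le 1$. So the very strategy $R$ witnessing strong compliance is a witness that the value $a = 1$ belongs to the set whose supremum defines $\overline{Q}(E)$.

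First I would fix a strategy $R$ of Reality that strongly complies with $E$, which exists by hypothesis. Then I would verify that $a = 1$ satisfies the condition inside the braces defining $\overline{Q}(E)$: for this $R$ and every choice of Forecaster's strategy $F$ and every Skeptic strategy $S$ obeying the collateral duty, the resulting path $\xi$ satisfies $\xi \in E$ (this is clause (i) of strong compliance) and $\sup_n \cK_n \le 1 = \cK_0$ (this is the strengthened clause (ii)). Hence $1 \in \{a \mid (\exists R)(\forall F)(\forall S)\, \cK_0 = a \Rightarrow E \wedge \sup_n \cK_n \le 1\}$, so $\overline{Q}(E) \ge 1$.

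For the reverse inequality $\overline{Q}(E) \le 1$, I would note that if some $a > 1$ were in the defining set, witnessed by a strategy $R'$, then along any play with $\cK_0 = a$ we would have $\sup_n \cK_n \le 1 < a = \cK_0$; but Skeptic can always play $M_n = 0$ (and $V_n = 0$ in the unbounded forecasting game), which keeps $\cK_n = \cK_0 = a$ for all $n$ and respects the collateral duty, contradicting $\sup_n \cK_n \le 1$. Therefore no $a > 1$ qualifies, giving $\overline{Q}(E) \le 1$, and combining the two inequalities yields $\overline{Q}(E) = 1$.

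There is essentially no obstacle here: the statement is a direct translation between two pieces of terminology, and the only mild subtlety is the normalization $\cK_0 = 1$ versus the free parameter $a$ in the definition of $\overline{Q}$, which is handled by the trivial ``Skeptic does nothing'' play as above. I would keep the proof to a couple of sentences.
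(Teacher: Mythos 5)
Your proof is correct and is exactly the argument the paper has in mind — the paper dismisses this proposition as ``immediate by definition,'' and your write-up simply spells out that immediacy: the strongly complying strategy $R$ witnesses $a=1$ in the set defining $\overline{Q}(E)$, and the ``Skeptic does nothing'' play rules out any $a>1$. No gaps.
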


\begin{proposition}
If $\overline{Q}(E)>0$, then Reality can comply with $E$.
\end{proposition}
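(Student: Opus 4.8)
The plan is to reduce the statement to the definition of $\overline{Q}$ by a change of variables that rescales the initial capital. Since $\overline{Q}(E)>0$ is a supremum, there exist a real number $a$ with $0<a\le 1$ and a strategy $R$ of Reality such that, in the game played with initial capital $\cK_0=a$, against every strategy $F$ of Forecaster and every strategy $S$ of Skeptic, the strategy $R$ produces a path $\xi\in E$ with $\sup_n\cK_n\le 1$. We must exhibit a strategy $R'$ that, in the ordinary game with $\cK_0=1$, produces a path in $E$ with $\sup_n\cK_n<\infty$ against every Forecaster and every Skeptic respecting his collateral duty.

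The key observation is that the capital process is homogeneous in Skeptic's moves: in the coin-tossing game and in the unbounded forecasting game alike, the increment $\cK_n-\cK_{n-1}$ is linear in Skeptic's move and the initial capital enters only as an additive constant. Hence, along a fixed sequence of moves of Forecaster and Reality, multiplying all of Skeptic's moves by $a$ turns the capital process started from $1$ into $a$ times the capital process started from $a$; in particular, non-negativity of the capital is preserved in both directions. I would use this to let $R'$ \emph{simulate} the $\cK_0=a$ game: at each of her turns Reality takes the observed history, multiplies every recorded Skeptic move by $a$, and plays what $R$ would play on this rescaled history. An induction on the rounds then shows that a play of $(F,S,R')$ in the $\cK_0=1$ game corresponds, through this rescaling of Skeptic's moves, to a play of $(F^a,S^a,R)$ in the $\cK_0=a$ game, where $F^a$ and $S^a$ are the Forecaster and Skeptic strategies obtained by un-scaling Skeptic's recorded moves before consulting $F$ and $S$; the two plays traverse the same path $\xi$, and their capitals satisfy $\cK_n^{(a)}=a\,\cK_n^{(1)}$ for every $n$.

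Since $S^a$ still respects the collateral duty (its capital is $a$ times a non-negative quantity) and since $R$ works against every Forecaster and Skeptic strategy, the defining property of $R$ yields $\xi\in E$ and $\sup_n\cK_n^{(1)}=a^{-1}\sup_n\cK_n^{(a)}\le a^{-1}<\infty$. As $F$ and $S$ were arbitrary, $R'$ witnesses that Reality can comply with $E$. Note that the bound $a^{-1}$ on the capital may exceed $\cK_0=1$ when $a<1$, which is exactly why the conclusion is mere compliance rather than strong compliance. The only delicate point, and the one I expect to require care when writing the details, is precisely this bookkeeping: Reality's and Forecaster's strategies are allowed to react to the running capital, so the reduction must be set up as a faithful simulation of the rescaled $\cK_0=a$ game rather than a naive reuse of $R$; since the capital is determined by the sequence of moves together with $\cK_0$, this simulation is well defined and the correspondence of plays is exact.
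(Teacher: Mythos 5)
Your proposal is correct. The paper offers no argument here at all: both propositions following the definition of $\overline{Q}$ are dismissed with ``The following are immediate by definition,'' so there is no proof to compare against. Your write-up supplies the one genuinely non-immediate point, namely that the witness $a$ in the definition of $\overline{Q}$ need not equal the initial capital $\cK_0=1$ appearing in the definition of compliance; your rescaling of Skeptic's moves by $a$ (using linearity of the capital increment in $(M_n,V_n)$ and the fact that $V_n\ge 0$ is preserved under multiplication by $a>0$) correctly transports $R$'s guarantee from the $\cK_0=a$ game to the $\cK_0=1$ game, yielding $\xi\in E$ and $\sup_n\cK_n\le a^{-1}<\infty$, which is exactly compliance (and correctly not strong compliance). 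The only cosmetic remark: since any $a$ in the defining set already satisfies $\cK_0=a\le\sup_n\cK_n\le 1$, the restriction $a\le 1$ is automatic rather than something you need to arrange.
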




A two-player game is called \emph{determined}
if one of the player has a winning strategy.

\begin{theorem}[Martin's theorem; see {\cite[Chapter 4.6]{ShaVov01}}]
\label{th:martin}
If the winning condition is Borel, then the game is determined.
\end{theorem}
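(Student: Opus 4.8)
The plan is not to reprove this: the statement is Martin's Borel determinacy theorem, which we quote from \cite[Chapter 4.6]{ShaVov01}. For completeness, here is the shape a self-contained argument would take. One first reduces to games on a countable alphabet — a play in the games of this paper is a sequence of reals, and coding each move as a point of $\nat^\nat$ turns such a game into a game on $\nat$ with a Borel payoff set and the same winner, so it suffices to handle games on $\nat$. The base case is the Gale--Stewart theorem: every open game, hence every closed game, is determined. If the player aiming to land in the open set $U$ has no winning strategy, call a finite position ``losing'' if from it that player still has no winning strategy; one checks that the opponent can always move so as to remain in losing positions, and a play that stays in losing positions forever never enters $U$, which gives the opponent a winning strategy.

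For a general Borel payoff one runs a transfinite induction on its Borel rank, using Martin's technique of \emph{unraveling}. A \emph{covering} of a game $G$ on a tree $T$ with payoff $A$ is an auxiliary game $\tilde G$ on a larger tree, equipped with monotone maps on positions and on plays, such that the new payoff $\tilde A$ projects onto $A$ and strategies can be lifted from $G$ to $\tilde G$ and projected back while preserving who wins. The key lemmas are that every closed set admits a covering turning it \emph{clopen}, that coverings compose, and that a suitable countable inverse limit of coverings unraveling sets $A_i$ yields a covering unraveling $\bigcup_i A_i$ and $\bigcap_i A_i$. Feeding this through the induction, any Borel $A$ is unraveled to a clopen set; the resulting clopen game is determined by Gale--Stewart, and the covering pushes the winning strategy back down to $G$.

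The hard part is exactly this covering machinery: unraveling a countable boolean combination forces one to nest the auxiliary trees and pass to limits so that the ``length'' of the auxiliary positions stays controlled at each finite stage while the construction climbs the entire Borel hierarchy, and this is the technical heart of Martin's proof — it is also why the theorem genuinely needs set-theoretic strength (it already fails in $\mathrm{ZFC}$ without the full power set axiom, by Friedman). A secondary point to verify is the initial reduction: one must check that the real-to-$\nat^\nat$ coding is Borel in both directions and that the events $E$ arising in our applications — cut out by capital conditions such as $\sup_n \cK_n<\infty$ together with a convergence condition, which is Borel — remain Borel after coding, so that Martin's theorem indeed applies to the winning set we care about.
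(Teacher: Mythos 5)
Your proposal takes the same route as the paper: Theorem \ref{th:martin} is stated there without proof, as an imported result cited from \cite[Chapter 4.6]{ShaVov01}, which is exactly what you do. Your optional sketch of the reduction to games on $\nat$, the Gale--Stewart base case, and Martin's unraveling induction is an accurate outline of the standard proof, but nothing beyond the citation is required to match the paper.
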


\begin{remark}
In fact, Martin's theorem says that quasi-Borel is enough.
\end{remark}


If Forecaster is not a player in the game,
then the game is with perfect information.
Then, by Martin's theorem,
we can show that $Q$ is equal to $P$ in this case.

\begin{theorem}
Consider that Forecaster's strategy is fixed in advance and Borel.
If $E$ is Borel, then $\overline{P}(E)=\overline{Q}(E)$
and $\underline{P}(E)=\underline{Q}(E)$.
\end{theorem}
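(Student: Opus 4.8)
The plan is to establish the two equalities by reducing everything to the single statement $\overline{P}(E)=\overline{Q}(E)$, since $\underline{P}$ and $\underline{Q}$ are defined from $\overline{P}$ and $\overline{Q}$ on the complement, and $E$ Borel iff $E^c$ Borel. For the inequality $\overline{Q}(E)\le\overline{P}(E)$: suppose Reality has a strategy $R$ witnessing $\overline{Q}(E)\ge a$, i.e. starting from $\mathcal K_0=a$, against all Forecaster and Skeptic moves (Forecaster's being the fixed Borel strategy), Reality forces $\xi\in E$ while keeping $\sup_n\mathcal K_n\le 1$. I would argue that this bounds Skeptic's ability to multiply capital on $E^c$: any Skeptic strategy, played against this $R$, never produces a path in $E^c$ at all, so in particular it cannot reach capital $\ge 1$ on $E^c$ starting from any initial stake; hence $\overline{P}(E^c)=0$, giving $\underline{P}(E)=1\ge$ anything, which is too strong — so more carefully, I should instead directly compare with $\overline{P}(E)$ via Proposition \ref{pro:P-larger} and the fact that $R$ witnessing $\overline{Q}(E)=1$ already forces determinacy considerations. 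The cleaner route: first show that whenever $\overline{Q}(E)>0$, in fact $\overline{Q}(E)=1$, because Reality's strategy witnessing a positive value already forces $E$ with bounded capital, and rescaling the capital bound is harmless for membership in $E$ (only the ``$\le 1$'' part scales); this shows $\overline{Q}$ takes only the values $0$ and $1$ on the relevant events.

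Given that dichotomy, the theorem becomes: for Borel $E$ (with Forecaster's Borel strategy fixed), $\overline{Q}(E)=1$ iff $\overline{P}(E)=1$, and $\overline{Q}(E)=0$ iff $\overline{P}(E)=0$; combined with $0\le\underline{P}\le\overline{P}\le 1$ and $0\le\underline{Q}\le\overline{Q}\le 1$, and the complementation identities, this forces $\overline{P}=\overline{Q}$ and $\underline{P}=\underline{Q}$ on all Borel events. The key engine is Martin's theorem (Theorem \ref{th:martin}): with Forecaster's strategy fixed and Borel, the game between Skeptic and Reality with winning condition ``$\xi\in E$ and $\sup_n\mathcal K_n<\infty$'' (or its Borel variants for the forcing/compliance conditions) is a two-player perfect-information game with Borel payoff, hence determined. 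If Skeptic has a winning strategy there, then Skeptic can force $E$, so by Proposition \ref{pro:force-P} $\underline{P}(E)=1$, hence $\overline{P}(E)=1$; and by Theorem \ref{th:force-comply} Reality can strongly comply with $E$, so $\overline{Q}(E)=1$. If instead Reality has a winning strategy, I would apply the symmetric argument to $E^c$: Martin's theorem on the dual game shows $\overline{P}(E^c)=1$ and $\overline{Q}(E^c)=1$, hence $\overline{P}(E)=\underline{P}(E)=0=\overline{Q}(E)=\underline{Q}(E)$.

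The main obstacle I anticipate is matching up the precise winning conditions so that Martin's theorem applies and so that ``Skeptic wins'' translates exactly to ``Skeptic can force $E$'' and ``Reality wins'' translates exactly to ``Reality can comply with $E$'' — in particular handling the capital bounds (the distinction between $\sup_n\mathcal K_n<\infty$, $\limsup_n\mathcal K_n=\infty$, and $\mathcal K_n\le\mathcal K_0$) and checking that each of these conditions, intersected with the Borel set $E$ and with Forecaster's fixed Borel responses, defines a Borel subset of the play space. One must also be careful that forcing is the convex-combination-closed notion (used for the ``only if'' in Proposition \ref{pro:force-P}) and that passing to $E^c$ genuinely gives a game of the same form. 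Once these bookkeeping points are pinned down, the two equalities follow by assembling the cases above with Propositions \ref{pro:force-P} and \ref{pro:P-larger} and Theorems \ref{th:force-comply} and \ref{th:martin}.
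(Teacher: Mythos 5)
There is a genuine gap, and it is at the heart of your ``cleaner route'': the claim that $\overline{Q}(E)>0$ implies $\overline{Q}(E)=1$ is false. In the definition of $\overline{Q}$ the quantity being varied is the initial capital $a=\mathcal{K}_0$ while the target bound $\sup_n\mathcal{K}_n\le 1$ is held fixed; if you rescale a Reality strategy that works from capital $a$ so that it starts from capital $1$, the constraint becomes $\sup_n\mathcal{K}_n\le 1/a$, not $\le 1$, so ``rescaling the capital bound is harmless'' does not go through. In fact $\overline{Q}$ must take intermediate values if the theorem is true: in the coin-tossing game with Forecaster fixed to $p_n=1/2$, the event $E=\{x_1=1\}$ has $\overline{P}(E)=1/2$ (Skeptic needs stake $1/2$ and the bet $M_1=1$ to reach capital $1$ on $E$ while staying non-negative), so the theorem forces $\overline{Q}(E)=1/2$. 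Your proposed dichotomy therefore contradicts the very statement being proved, and everything built on it collapses. The second paragraph inherits the problem: determinacy of the game ``Skeptic forces $E$'' only splits the world into ``Skeptic can force $E$'' and ``Skeptic cannot force $E$''; in the latter case you conclude $\overline{P}(E)=\underline{P}(E)=0$, which is wrong for the same example ($\overline{P}(E)=1/2$ even though Skeptic cannot force $E$). Proposition \ref{pro:force-P} and Theorem \ref{th:force-comply} only detect the extreme values $0$ and $1$ and cannot recover the equality of two genuinely $[0,1]$-valued set functions.

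The missing idea is to apply Martin's theorem not to the forcing game but to the one-parameter family of games whose winning condition for Skeptic is literally the clause in the definition of $\overline{P}$ at stake $a$, namely $\mathcal{K}_0=a\wedge E\Rightarrow\sup_n\mathcal{K}_n\ge1$; with Forecaster's strategy and $E$ Borel this condition is Borel for each $a$, and its negation is precisely Reality's compliance condition at level $a$ appearing in the definition of $\overline{Q}$ (up to the harmless boundary between $<1$ and $\le1$). Hence for every $a$ strictly below $\overline{P}(E)$ Skeptic has no winning strategy, so Martin's theorem gives Reality one, witnessing $\overline{Q}(E)\ge a$; letting $a\to\overline{P}(E)$ yields $\overline{Q}(E)\ge\overline{P}(E)$ at every value, not just at $1$. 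The reverse inequality is the same argument with the players' roles interchanged, the degenerate case $\overline{P}(E)=0$ is handled separately by doubling a near-optimal Skeptic strategy, and the statements for $\underline{P}$ and $\underline{Q}$ follow by passing to $E^c$. Your instinct to invoke Martin's theorem and to reduce the lower-probability claims to the upper-probability ones via complementation is correct; it is the choice of game (and the false $0$--$1$ law for $\overline{Q}$) that needs to be repaired.
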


\begin{proof}
Suppose the game whose winning condition $C_a$ of Skeptic is
\[\mathcal{K}_0=a\wedge E\Rightarrow\sup_n \mathcal{K}_n\ge1.\]
This condition can be written as
\[A\wedge\left(E^c\vee\bigwedge_m\bigvee_n B_{n,m}\right)\]
where $A$ is $\cK_0=a$ and $B_{n,m}$ is $\cK_n\ge1-2^{-m}$.
Note that $A$ and $B_{n,m}$ are Borel.
Recall that the class of Borel sets is closed under countable union,
countable intersection and complement.
Thus, the condition is Borel for each $a$.
\begin{note}
\marginpar{reference? clear now?}
\end{note}

Consider $\overline{P}(E)=x>0$, and let $\epsilon$ be a positive real small enough.
If $\mathcal{K}_0=x-\epsilon$, then no strategy $S$ can guarantee to win.
Then Reality has a winning strategy by Martin's theorem.
It follows that $\overline{Q}(E)\ge x=\overline{P}(E)$.

By a similar argument,
by interchansing the roles of the players,
we can show $\ol{Q}(E)\le\ol{P}(E)$.
Hence $\ol{Q}(E)=\ol{P}(E)$.

Consider $\overline{P}(E)=0$.
Then for each $x>0$ there exists a strategy $S$ such that
$\cK_0<x/2$ and $E\Rightarrow\sup_n \cK_n\ge 1$.
Thus, the strategy $2S$ forces
$\cK_0<x$ and $E\Rightarrow\sup_n \cK_n>1$.
This means that no Reality's strategy complies with
$\cK_0<x$ and $E\Rightarrow\sup_n \cK_n\le1$.
Hence, $\overline{Q}(E)\le x$.
Since $x$ is arbitrary, we have $\overline{Q}(E)=0$.

Since $E$ is Borel, then so is $E^c$.
Note that $\overline{P}(E^c)=\overline{Q}(E^c)$.
Then $\underline{P}(E)=\underline{Q}(E)$.
\end{proof}


As we will see in Corollary \ref{cor:q-non-linear},
we do not have $\overline{Q}(E)\le\overline{Q}(E)$
or $\underline{Q}(E)\ge\overline{Q}(E)$ in general.
Thus, we introduce the convex closure of them as follows:
\[\mathrm{conv}[\ul{Q}(E),\ol{Q}(E)]
=\begin{cases}[\ul{Q}(E),\ol{Q}(E)]&\mbox{ if }\quad\ul{Q}(E)\le\ol{Q}(E)\\
[\ol{Q}(E),\ul{Q}(E)]&\mbox{ if }\quad\ul{Q}(E)>\ol{Q}(E).\end{cases}\]

Theorem \ref{th:force-comply} says that, in our terminology,
\[\underline{P}(E)=1\Rightarrow \overline{Q}(E)=1.\]
By a little modification, we can show the following.

\begin{proposition}\label{pro:comply-not-full}
Let $E$ be an event.
\begin{enumerate}
\item The intervals $\mathrm{conv}[\ul{Q}(E),\ol{Q}(E)]$
and $[\ul{P}(E),\ol{P}(E)]$ always overlap.
\item $\ol{Q}(E)<\ul{Q}(E)$ is only possible when they are nested:
\[\mathrm{conv}[\ul{Q}(E),\ol{Q}(E)]\subseteq[\ul{P}(E),\ol{P}(E)].\]
\end{enumerate}
\end{proposition}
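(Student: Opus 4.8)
The plan is to work directly from the definitions of $\overline{Q}$, $\underline{Q}$, $\overline{P}$, $\underline{P}$ together with the two basic facts about $P$: that $\overline P$ is an outer measure with $\overline P(E)+\overline P(E^c)\ge 1$ (equivalently $\underline P(E)\le\overline P(E)$, Proposition \ref{pro:P-larger}), and the combination-of-strategies fact underlying Proposition \ref{pro:force-P}. The key structural observation I would isolate first is a ``mutual exclusion'' lemma: Reality cannot strongly comply with an event $E$ with initial capital $a$ while Skeptic simultaneously witnesses $\overline P(E)<a$ in the strong sense used in the proof of the previous theorem. Concretely, if Reality has a strategy $R$ with $\mathcal K_0=a$ forcing $E\wedge\sup_n\mathcal K_n\le 1$, and Skeptic has a strategy $S$ with $\mathcal K_0=b$ forcing $E\Rightarrow\sup_n\mathcal K_n\ge 1$, then playing $R$ against $S$ with the capital normalization reconciled gives a contradiction unless $b\ge a$ — this is exactly the argument already run in the last paragraph of the proof of the preceding theorem (the ``$2S$ forces $\ldots$'' step), and I would quote that style of argument. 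From this one extracts $\overline Q(E)\le \overline P(E)$ whenever $\overline P(E)>0$, and more precisely that the witnessing thresholds interleave.

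Given that lemma, part (i) follows by a short case analysis. If $\overline Q(E)\le\underline Q(E)$ then $\mathrm{conv}[\underline Q(E),\overline Q(E)]=[\underline Q(E),\overline Q(E)]$, and I would show $\underline Q(E)\le\overline P(E)$ and $\underline P(E)\le\overline Q(E)$ directly: $\underline Q(E)=1-\overline Q(E^c)\le 1-(\text{something})$, combined with $\overline Q(E^c)\le\overline P(E^c)$ from the lemma and $\overline P(E)+\overline P(E^c)\ge 1$, yields $\underline Q(E)\le\overline P(E)$; symmetrically for the other inequality. Two intervals $[\alpha,\beta]$ and $[\gamma,\delta]$ in $[0,1]$ with $\alpha\le\delta$ and $\gamma\le\beta$ necessarily overlap, which is what part (i) asserts. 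If instead $\overline Q(E)<\underline Q(E)$, then the conv-interval is $[\overline Q(E),\underline Q(E)]$ and the same two inequalities (now read as $\overline Q(E)\le\overline P(E)$ via the lemma applied appropriately, and $\underline P(E)\le\underline Q(E)$) give overlap again; in fact part (ii) is the stronger claim that here the $Q$-interval sits inside the $P$-interval, i.e. $\underline P(E)\le\overline Q(E)$ and $\underline Q(E)\le\overline P(E)$, which is precisely the pair of inequalities just discussed.

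So for part (ii) I would argue: suppose $\overline Q(E)<\underline Q(E)$. Pick $a<\underline Q(E)=1-\overline Q(E^c)$, so $\overline Q(E^c)<1-a$, and by the lemma $\overline P(E^c)\ge\overline Q(E^c)$ is not immediately what I want — rather I want $\overline Q(E^c)<1-a$ to feed into $\underline P(E)=1-\overline P(E^c)$. The clean route is: the lemma gives $\overline Q(F)\le\overline P(F)$ for every event $F$ with $\overline P(F)>0$, and one checks the degenerate case $\overline P(F)=0$ separately exactly as in the proof of the previous theorem (there $\overline Q(F)=0$ too). Hence $\overline Q(E^c)\le\overline P(E^c)$, so $\underline Q(E)=1-\overline Q(E^c)\ge 1-\overline P(E^c)=\underline P(E)$, giving $\underline P(E)\le\underline Q(E)$; symmetrically $\overline Q(E)\le\overline P(E)$. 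Since also $\overline P(E)\le 1$ and wait — I need $\overline Q(E)\ge\underline P(E)$ as well to get containment. That comes from $\overline Q(E)\le\overline Q$'s relation to... the one genuinely delicate inequality is $\underline P(E)\le\overline Q(E)$, and I expect \textbf{this is the main obstacle}: it must be derived from the assumption $\overline Q(E)<\underline Q(E)$ itself (it is false in general, per Corollary \ref{cor:q-non-linear}), presumably by noting that if $\overline Q(E)<\underline P(E)$ then combining a near-optimal Reality strategy for $E^c$ (which exists with capital close to $\overline Q(E^c)=1-\underline Q(E)$, a relatively large value since $\underline Q(E)$ is large) with the Skeptic strategy witnessing $\underline P(E)$ large produces a contradiction via the mutual-exclusion lemma applied to $E^c$. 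I would set up that final contradiction carefully, since it is where the hypothesis $\overline Q(E)<\underline Q(E)$ is actually consumed; everything else is bookkeeping with outer/inner measures on $[0,1]$.
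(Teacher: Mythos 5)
There is a genuine gap, and it sits exactly where you flagged it. The paper reduces both parts to the two \emph{unconditional} inequalities $\ul{P}(E)\le\ol{Q}(E)$ and $\ul{Q}(E)\le\ol{P}(E)$; together with $\ul{P}(E)\le\ol{P}(E)$ these give overlap in every case and give nesting automatically whenever $\ol{Q}(E)<\ul{Q}(E)$. Your mutual-exclusion lemma is correct as far as it goes, but it proves a \emph{different} pair of inequalities: pitting a Reality witness for $\ol{Q}(E)$ with capital $a$ against a rescaled Skeptic witness for $\ol{P}(E)$ with capital $b<a$ yields $\ol{Q}(E)\le\ol{P}(E)$, and dually $\ul{P}(E)\le\ul{Q}(E)$. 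Those two do suffice for part (i) (indeed they give nesting outright when $\ul{Q}(E)\le\ol{Q}(E)$), but they do not give part (ii): when $\ol{Q}(E)<\ul{Q}(E)$ the interval $\mathrm{conv}[\ul{Q}(E),\ol{Q}(E)]=[\ol{Q}(E),\ul{Q}(E)]$ has \emph{lower} endpoint $\ol{Q}(E)$, so nesting requires $\ul{P}(E)\le\ol{Q}(E)$, which is strictly stronger than $\ul{P}(E)\le\ul{Q}(E)$. Your parenthetical that $\ul{P}(E)\le\ol{Q}(E)$ ``is false in general'' is incorrect --- Corollary \ref{cor:q-non-linear} does not bear on it, and the paper proves it for every event --- and your fallback (apply the mutual-exclusion lemma to $E^c$, consuming the hypothesis $\ol{Q}(E)<\ul{Q}(E)$) only reproduces $\ol{Q}(E^c)\le\ol{P}(E^c)$, i.e.\ $\ul{P}(E)\le\ul{Q}(E)$ again; likewise the subadditivity chain in your middle paragraph yields $\ul{P}(E)\le\ul{Q}(E)$ rather than the claimed $\ul{Q}(E)\le\ol{P}(E)$.

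The missing ingredient is constructive and cannot be extracted from any ``play $R$ against $S$'' argument, because $\ul{P}(E)\le\ol{Q}(E)$ asserts the \emph{existence} of a good Reality strategy rather than the incompatibility of two existing ones. The paper's proof is the fictional-Skeptic device of Theorem \ref{th:force-comply}: if $\ul{P}(E)=x>0$, then $\ol{P}(E^c)=1-x$, so there is a Skeptic strategy $S$ with $\cK_0^S=1-x+\epsilon$ such that $E^c\Rightarrow\sup_n\cK_n^S\ge1$. Against an actual opponent $T$ with $\cK_0^T=x-2\epsilon$, Reality plays so that the combined capital $\cK_n^{S+T}$ (initial value $1-\epsilon$) is non-increasing. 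Since both component capitals are non-negative, each stays $\le1-\epsilon$; since $\cK_n^S$ never reaches $1$, the event $E$ must hold, and $\sup_n\cK_n^T\le1$, witnessing $\ol{Q}(E)\ge x-2\epsilon$ and hence $\ol{Q}(E)\ge x$. Applying this to $E^c$ gives $\ul{Q}(E)\le\ol{P}(E)$, and both parts then follow by bookkeeping with no case analysis on the sign of $\ul{Q}(E)-\ol{Q}(E)$.
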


\begin{proof}
It suffices to show that
\[\underline{P}(E) \le \overline{Q}(E)\mbox{ and }
\underline{Q}(E) \le \overline{P}(E).\]
We can assume $\underline{P}(E)=x>0$.
Then $\overline{P}(E^c)=1-x$.
Hence there exists $S$ such that
$\mathcal{K}_0^S=1-x+\epsilon$,
and $E^c\Rightarrow\sup_n \mathcal{K}_n^S\ge1$
for small enough $\epsilon>0$.

Now consider a strategy $T$ of Skeptic
such that $\mathcal{K}_0^T=x-2\epsilon$.
Then $S+T$ is a strategy such that $\mathcal{K}_0^{S+T}=1-\epsilon$.
Hence Reality has the strategy satisfying
$\sup_n\mathcal{K}_n^{S+T}\le1-\epsilon$.
It follows that $\sup_n \mathcal{K}_n^S\le1-\epsilon$ and $\sup_n \mathcal{K}_n^T\le1-\epsilon$.
Since $E^c\Rightarrow\sup_n\cK_n^S\ge1$, we have $E$.
Then this strategy is the witness of $\overline{Q}(E)\ge x$.
Hence $\ul{P}(E)\le\ol{Q}(E)$.
Considering $E^c$, we also have $\ul{Q}(E)\le\ol{P}(E)$.
\end{proof}

\begin{corollary}
If $\underline{P}(E)>0$, then Reality can comply with $E$.
\end{corollary}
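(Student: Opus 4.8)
The plan is to derive this immediately from the two preceding results rather than doing anything new. First I would invoke Proposition~\ref{pro:comply-not-full}: its proof establishes the inequality $\underline{P}(E)\le\overline{Q}(E)$, so the hypothesis $\underline{P}(E)>0$ gives $\overline{Q}(E)>0$ at once. Then I would apply the earlier proposition stating that $\overline{Q}(E)>0$ implies Reality can comply with $E$ (by rescaling a witnessing strategy from initial capital $a>0$ to initial capital $1$, so that $\sup_n\mathcal{K}_n\le 1/a<\infty$), which finishes the argument.

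There is no real obstacle here, since all the content sits in Proposition~\ref{pro:comply-not-full}. If one prefers a self-contained argument, it suffices to recall how the witnessing strategy is built: write $x=\underline{P}(E)$ and fix a small $\epsilon>0$; choose a Skeptic strategy $S$ with $\mathcal{K}_0^S=1-x+\epsilon$ such that $E^c\Rightarrow\sup_n\mathcal{K}_n^S\ge1$, and an arbitrary Skeptic strategy $T$ with $\mathcal{K}_0^T=x-2\epsilon$. Reality then plays against $S+T$ (whose initial capital is $1-\epsilon$) so that $\sup_n\mathcal{K}_n^{S+T}\le1-\epsilon$; this forces $\sup_n\mathcal{K}_n^S<1$, hence $\xi\in E$, while $\sup_n\mathcal{K}_n^T\le1-\epsilon$ stays bounded. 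Rescaling so that $\mathcal{K}_0=1$, the same strategy of Reality keeps the capital bounded and produces a path in $E$, which is exactly compliance with $E$.
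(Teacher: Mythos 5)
Your proposal is correct and matches the paper's intent exactly: the corollary is stated without proof precisely because it follows by chaining the inequality $\underline{P}(E)\le\overline{Q}(E)$ from the proof of Proposition~\ref{pro:comply-not-full} with the earlier proposition that $\overline{Q}(E)>0$ implies Reality can comply with $E$. Your self-contained recapitulation of the witnessing strategy is also faithful to the paper's construction, so there is nothing to add.
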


\subsection{Examples}

\subsubsection{Coin-tossing game}

In the following we look at some examples.
Some examples show the difference between $P$ and $Q$.

\begin{example}\label{ex:coin-tossing-finite}
In the coin-tossing game, let $E$ be the event that
$S_n=\sum_{k=1}^n x_k$ is finite.
Then,
\[\ul{P}(E)=0,\quad \ol{P}(E)=1,\quad
\ul{Q}(E)=1,\quad\ol{Q}(E)=0.\]
Skeptic can not force $E$ or $E^c$.
Reality can not comply with $E$ or $E^c$.
\end{example}

Note that $E$ is a tail event.

\begin{proof}
First, we claim that $\ul{P}(E)=0$.
Assume that $\cK_0<1$.
Consider the case that $\cK_0<\prod_{n=1}^\infty p_n<1$
and $x_n=1$ for every $n$.
Since Skeptic needs to keep $\cK_n\ge0$, we have
$\cK_{n-1}+M_n(0-p_n)\ge0$,
thus $M_n\le\frac{\cK_{n-1}}{p_n}$.
Then,
\[\cK_n=\cK_{n-1}+M_n(1-p_n)\le\cK_{n-1}+\frac{1-p_n}{p_n}\cK_{n-1}
=\frac{\cK_{n-1}}{p_n}\le\frac{\cK_0}{\prod_{i=1}^n p_i}.\]
Thus, $\sup_n \cK_n\le\frac{\cK_0}{\prod_{n=1}^\infty p_n}<1$.
Hence $\ol{P}(E^c)=1$ and $\ul{P}(E)=0$.

We claim that $\ol{P}(E)=1$.
Assume that $\cK_0<1$.
Consider the case that $\cK_0<\prod_{n=1}^\infty(1-p_n)<1$
and $x_n=0$ for every $n$.
By an argument similar to above,
$M_n$ must satisfy $-M_n\le\frac{\cK_{n-1}}{1-p_n}$
and $\sup_n\cK_n\le\frac{\cK_0}{\prod_{n=1}^\infty(1-p_n)}<1$.
Hence $\ol{P}(E)=0$.

Since $\ul{P}(E)=0$ and $\ul{P}(E^c)=0$,
Skeptic can not force $E$ or $E^c$.

We claim that $\ul{Q}(E)=1$.
Consider the case that $\sum_n p_n<\infty$
and Skeptic uses a strategy that forces
$\sum_n p_n<\infty\Rightarrow S_n<\infty$.
Then $\sup_n\cK_n=\infty$ if $\cK_0>0$.
This means that $\ol{Q}(E^c)=0$ and $\ul{Q}(E)=1$.

Similarly, we have $\ol{Q}(E)=0$
because Skeptic can force $\sum_n p_n=\infty\Rightarrow S_n=\infty$.

Consider the case that $\sum_n p_n=\infty$
and Skeptic uses a strategy that forces
$\sum_n p_n=\infty\Rightarrow \lim_n S_n=\infty$.
In order to make $\sup_n \cK_n<\infty$,
Reality needs to announce $\{x_n\}$ so that $\lim_n S_n=\infty$.
This means that Reality can not comply with $E$.
A similar argument can be applied for $E^c$.
\end{proof}

\begin{corollary}
The function $\overline{Q}$ is not an outer measure in general
and $\underline{Q}$ is not an inner measure in general.
\end{corollary}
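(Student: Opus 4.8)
The plan is to read the corollary off Example~\ref{ex:coin-tossing-finite}. There the tail event $E=\{S_n \text{ is finite}\}$ of the coin-tossing game satisfies $\ul Q(E)=1$ and, as its proof shows, $\ol Q(E)=\ol Q(E^c)=0$; combining these with the defining relation $\ul Q(A)=1-\ol Q(A^c)$ also gives $\ul Q(E^c)=1$. I would first record two elementary facts. (a) Both $\ol Q$ and $\ul Q$ are monotone: if Reality strongly complies with $A$ and $A\subseteq B$, then she strongly complies with $B$, so $\ol Q$ is monotone, and hence so is $\ul Q$. (b) $\ol Q(\Omega)=1$ in the coin-tossing game: once Forecaster's $p_n\in[0,1]$ and Skeptic's $M_n\in\real$ are announced, Reality can always pick $x_n\in\{0,1\}$ with $M_n(x_n-p_n)\le 0$ — take $x_n=0$ if $M_n\ge0$, and $x_n=1$ if $M_n<0$ — so from $\cK_0=1$ she keeps $\cK_n$ non-increasing and thus strongly complies with $\Omega$, while no $a>1$ is admissible; dually $\ol Q(\emptyset)=0$, so $\ul Q(\Omega)=1$.

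Given (a) and (b), the two contradictions are immediate. An outer measure is finitely subadditive, so if $\ol Q$ were an outer measure we would get $1=\ol Q(\Omega)=\ol Q(E\cup E^c)\le \ol Q(E)+\ol Q(E^c)=0$, which is absurd; hence $\ol Q$ is not an outer measure in general. An inner measure is superadditive on disjoint sets, so if $\ul Q$ were an inner measure we would get $1=\ul Q(\Omega)\ge \ul Q(E)+\ul Q(E^c)=2$, again absurd; hence $\ul Q$ is not an inner measure in general.

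I do not expect a genuine obstacle here, since everything is already packaged in Example~\ref{ex:coin-tossing-finite}. The one step that deserves a sentence of proof rather than a citation is the evaluation $\ol Q(\Omega)=1$, and it rests on the specific move structure of the coin-tossing game: the binary range $\{0,1\}$ of $x_n$ together with $p_n\in[0,1]$ is exactly what lets Reality always force the capital increment to be non-positive. (The same remark applies to the unbounded forecasting game, where Reality may play $x_n=m_n$, making the increment $-V_nv_n\le 0$, so the corollary holds there as well.)
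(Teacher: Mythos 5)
Your argument is correct and is essentially the paper's own proof: both read the counterexample off Example~\ref{ex:coin-tossing-finite} and exhibit the failure of subadditivity of $\ol{Q}$ on the disjoint pair $E$, $E^c$ with $\ol{Q}(E\cup E^c)=1$. You merely fill in two details the paper leaves implicit --- the verification that $\ol{Q}(\Omega)=1$ via Reality's non-increasing-capital move, and the dual superadditivity violation $\ul{Q}(E)+\ul{Q}(E^c)=2>1=\ul{Q}(\Omega)$ for the inner-measure half --- which is a welcome but not substantively different elaboration.
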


\begin{proof}
In Example \ref{ex:coin-tossing-finite},
we have $\overline{Q}(E)=\overline{Q}(E^c)=0$
and $\ol{Q}(E\cup E^c)=1$.
Thus, the inequality $\ol{Q}(A\cup B)\le\ol{Q}(A)+\ol{Q}(B)$
does not hold even if $A$ and $B$ are disjoint.
\end{proof}

\begin{example}\label{ex:positive-p_1}
In coin-tossing game,
let $E$ be the event that
\[p_1>0\Rightarrow x_1=1.\]
Then,
\[\ul{P}(E)=0,\quad\ol{P}(E)=1,\quad\ul{Q}(E)=1,\quad\ol{Q}(E)=0.\]
Skeptic can not force $E$ or $E^c$.
Reality can comply with $E$ but can not comply with $E^c$.
\end{example}

\begin{proof}
We claim that $\ul{P}(E)=1$.
Assume $\cK_0<1$.
Consider the case that $p_1=x_1=1$.
Then, $\cK_1=\cK_0<1$.
Thus, no strategy of Skeptic gurantees $\sup_n\cK_n\ge1$ even if $E^c$ happens.
Hence, $\ol{P}(E^c)=1$ and $\ul{P}(E)=0$.

We claim that $\ol{P}(E)=1$.
Assume $\cK_0<1$.
Consider the case that $p_1=x_1=0$.
Then, $\cK_1=\cK_0<1$.
Thus, no strategy of Skeptic gurantees $\sup_n\cK_n\ge1$ even if $E$ happens.
Hence, $\ol{P}(E)=1$.

Since $\ul{P}(E)=0$ and $\ul{P}(E^c)=0$,
Skeptic can not force $E$ or $E^c$.

We claim that $\ul{Q}(E)=1$.
Consider the case that $p_1=0$.
Then, $E^c$ can not happen.
Hence, Reality can not comply with $E^c$, $\ol{Q}(E^c)=0$ and $\ul{Q}(E)=1$.

We claim that $\ol{Q}(E)=0$.
Assume $\cK_0>0$.
Consider the case that $p_1=\cK_0/2$ and $M_1=2$.
Then, the capital $\cK_1$ for $x_1=0$ is
\[\cK_1=\cK_0+2(0-p_1)=0\]
and the capital $\cK_1$ for $x_1=1$ is
\[\cK_1=\cK_0+2(1-p_1)=2.\]
Thus, $\ol{Q}(E)=0$.

We claim that Reality can comply with $E$.
Consider the following strategy of Reality:
\[x_1=\begin{cases}0&\mbox{ if }p_1=0\\
1&\mbox{ if }p_1>0\end{cases},\quad
x_n=\begin{cases}0&\mbox{ if }M_n\ge0\\
1&\mbox{ if }M_n<0.\end{cases}\]
Then, $\sup_n\cK_n=\cK_1<\infty$.
Notice that this strategy gurantees that $E$ happens.
Hence, Reality can comply with $E$.
\end{proof}

Recall that, if $\ol{Q}(E)$ is positive,
then Reality can comply with $E$.
Example \ref{ex:positive-p_1} shows that the converse does not hold.

\subsubsection{Bounded forecasting game}

\begin{quote}
{\sc Bounded Forecasting Game}\\
\textbf{Players}: Forecaster, Skeptic, Reality\\
\textbf{Protocol}:\\
\indent $\cK_0:=1$.\\
\indent FOR $n=1,2,\ldots$:\\
\indent\indent Forecaster announces $p_n\in[0,1]$.\\
\indent\indent Skeptic announces $M_n\in\mathbb{R}$.\\
\indent\indent Reality announces $x_n\in[0,1]$.\\
\indent\indent $\cK_n:=\cK_{n-1}+M_n(x_n-p_n)$.\\
\textbf{Collateral Duties}:
Skeptic must keep $\cK_n$ non-negative.
Reality must keep $\cK_n$ from tending to infinity.
\end{quote}

\begin{example}\label{ex:x_n=p_n}
In the bounded forecasting game,
let $E$ be the event that $x_n=p_n$ for infinitely many $n$.
Then,
\[\ul{P}(E)=0,\quad\ol{P}(E)=1,\quad\ul{Q}(E)=0,\quad\ol{Q}(E)=1.\]
Skeptic can not force $E$ or $E^c$.
Reality can comply with $E$ and $E^c$.
In contrast, Reality can not strongly comply with $E^c$.
\end{example}

\begin{proof}
We claim that $\ul{P}(E)=0$.
Assume $\cK_0<1$.
Consider the case that $p_n=1/2$ for every $n$
and
\[x_n=\begin{cases}1&\mbox{ if }M_n\le0\\
0&\mbox{ if }M_n>0.\end{cases}\]
Then, $E^c$ happens but $\sup_n\cK_n\le\cK_0<1$.
Hence, $\ol{P}(E^c)=1$ and $\ul{P}(E)=0$.

We claim that $\ol{P}(E)=1$.
Assume $\cK_0<1$.
Consider the case that $p_n=x_n=1/2$ for every $n$.
Then, $E$ happns but $\sup_n\cK_n\le\cK_0<1$.
Hence, $\ol{P}(E)=1$.

Since $\ul{P}(E)=\ul{P}(E^c)=0$,
Skeptic can not force $E$ or $E^c$.

We claim that $\ul{Q}(E)=0$.
Assume $\cK_0<1$.
Let $q$ be a rational such that $\cK_0<q<1$.
Consider the following strategy of Reality: for every $n$,
\begin{enumerate}
\item if $p_n=0$, then take $x_n>0$ so that
$M_nx_n\le\frac{q-\cK_{n-1}}{2}$,
\item if $p_n=1$, then take $x_n<1$ so that
$M_n(x_n-1)\le\frac{q-\cK_{n-1}}{2}$,
\item if $p_n\in(0,1)$ and $M_n\le0$, then $x_n=1$,
\item if $p_n\in(0,1)$ and $M_n>0$, then $x_n=0$.
\end{enumerate}
Then, inductively, we have 
\[\cK_n
=\cK_{n-1}+M_n(x_n-p_n)
\le\cK_{n-1}+\frac{q-\cK_{n-1}}{2}=\frac{q+\cK_{n-1}}{2}.\]
Hence, $\sup_n\cK_n\le q<1$.
This means that $\ol{Q}(E^c)=1$ and $\ul{Q}(E)=0$.
By $\ol{Q}(E^c)>0$, Reality can comply with $E$.

We claim that $\ol{Q}(E)=1$.
This is verified by considering the strategy of Reality
that just announces $x_n=p_n$ for every $n$.
By $\ol{Q}(E)>0$, Reality can comply with $E$.

Finally, we claim that Reality can not strongly comply with $E^c$.
Consider the case that $p_n=0$ and $M_n=1$ for every $n$.
Notice that Skeptic can keep $\cK_n\ge0$ by this strategy.
To keep $\cK_n\le\cK_0$ for every $n$,
Reality needs to announce $x_n=0$ for every $n$.
This means that Reality can not strongly comply with $E^c$.
\end{proof}

\begin{corollary}\label{cor:q-non-linear}
The function $\overline{Q}$ is not an inner measure in general
and $\underline{Q}$ is not an outer measure in general.
\end{corollary}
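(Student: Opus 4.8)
The plan is to reuse Example~\ref{ex:x_n=p_n}, mirroring the earlier corollary that used Example~\ref{ex:coin-tossing-finite}. Work in the bounded forecasting game and let $E$ be the event that $x_n=p_n$ for infinitely many $n$. That example already records
\[\ol{Q}(E)=\ol{Q}(E^c)=1,\qquad \ul{Q}(E)=\ul{Q}(E^c)=0,\]
and $E$ and $E^c$ are disjoint with $E\cup E^c=\Omega$. So the only extra input needed is the value of $\ol{Q}$ and $\ul{Q}$ on $\Omega$, and I claim $\ol{Q}(\Omega)=\ul{Q}(\Omega)=1$: Reality with $\cK_0=1$ and the move $x_n=p_n$ for all $n$ keeps $\cK_n\equiv1$, which shows $\ol{Q}(\Omega)\ge1$, while the pair of moves $p_n=1/2$, $M_n=0$ freezes the capital at $\cK_0$, forcing $\cK_0\le1$ in any witness; thus $\ol{Q}(\Omega)=1$, and $\ul{Q}(\Omega)=1-\ol{Q}(\emptyset)=1$ since $\ol{Q}(\emptyset)=0$ (no admissible $a$ can make the defining implication hold for an impossible event).

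With these numbers in hand the corollary is immediate. If $\ol{Q}$ were an inner measure it would be finitely superadditive on disjoint sets, so we would get $1=\ol{Q}(\Omega)=\ol{Q}(E\cup E^c)\ge\ol{Q}(E)+\ol{Q}(E^c)=2$, a contradiction; hence $\ol{Q}$ is not an inner measure. Dually, if $\ul{Q}$ were an outer measure it would be finitely subadditive, so $1=\ul{Q}(\Omega)=\ul{Q}(E\cup E^c)\le\ul{Q}(E)+\ul{Q}(E^c)=0$, again a contradiction; hence $\ul{Q}$ is not an outer measure.

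There is essentially no obstacle here beyond bookkeeping; the one point requiring a sentence of care is the evaluation of $\ol{Q}$ on the trivial events $\Omega$ and $\emptyset$, which rests on the observation that against Skeptic's constant zero strategy Reality cannot push the capital below its initial value, together with the convention that the supremum in the definition of $\ol{Q}$ over an empty family of admissible initial capitals equals $0$. All remaining content is supplied directly by Example~\ref{ex:x_n=p_n}.
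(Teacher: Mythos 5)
Your proof is correct and follows essentially the same route as the paper: the paper's own proof is the one-line observation that $\ol{Q}(E)=\ol{Q}(E^c)=\ol{Q}(E\cup E^c)=1$ for the event of Example~\ref{ex:x_n=p_n}, with the failure of superadditivity (and, dually, of subadditivity for $\ul{Q}$) left implicit. You merely make explicit the evaluation of $\ol{Q}$ and $\ul{Q}$ on $\Omega$ and $\emptyset$, which is a harmless (and arguably welcome) amount of extra bookkeeping.
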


\begin{proof}
Notice that $\ol{Q}(E)=\ol{Q}(E^c)=\ol{Q}(E\cup E^c)=1$.
\end{proof}

\begin{corollary}\label{cor:not-order}
We do not have $\ul{Q}(E)\le\ol{Q}(E)$ or
$\ul{Q}(E)\ge\ol{Q}(E)$ in general.
\end{corollary}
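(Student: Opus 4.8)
The statement asserts that neither of the two orderings between $\ul{Q}(E)$ and $\ol{Q}(E)$ holds for all events, so it suffices to exhibit one event violating each. Both witnesses have already been computed in the preceding examples, so the proof is essentially a matter of quoting them. My plan is: first point to Example \ref{ex:coin-tossing-finite} (equivalently Example \ref{ex:positive-p_1}) for the failure of $\ul{Q}(E)\le\ol{Q}(E)$, then point to Example \ref{ex:x_n=p_n} for the failure of $\ul{Q}(E)\ge\ol{Q}(E)$.

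For the first direction, recall that in the coin-tossing game the event $E$ that $S_n=\sum_{k=1}^n x_k$ is finite satisfies $\ul{Q}(E)=1$ and $\ol{Q}(E)=0$, so $\ul{Q}(E)>\ol{Q}(E)$; this already rules out the inequality $\ul{Q}(E)\le\ol{Q}(E)$ holding in general. For the second direction, recall that in the bounded forecasting game the event $E$ that $x_n=p_n$ for infinitely many $n$ satisfies $\ul{Q}(E)=0$ and $\ol{Q}(E)=1$, so $\ul{Q}(E)<\ol{Q}(E)$; this rules out $\ul{Q}(E)\ge\ol{Q}(E)$ holding in general. Together these two examples establish the corollary.

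There is no real obstacle here: the work was done in proving the two examples, and the corollary is a one-line consequence. The only point to be slightly careful about is that the two witnessing events live in two different protocols (the coin-tossing game and the bounded forecasting game), but since the claim is only that the orderings fail ``in general'', exhibiting counterexamples in the games under discussion is exactly what is needed.

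\begin{proof}
In Example \ref{ex:coin-tossing-finite} we have $\ul{Q}(E)=1$ and $\ol{Q}(E)=0$, so $\ul{Q}(E)\le\ol{Q}(E)$ fails. In Example \ref{ex:x_n=p_n} we have $\ul{Q}(E)=0$ and $\ol{Q}(E)=1$, so $\ul{Q}(E)\ge\ol{Q}(E)$ fails.
\end{proof}
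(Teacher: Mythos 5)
Your proof is correct and matches the paper's own argument, which likewise cites Example \ref{ex:coin-tossing-finite} and Example \ref{ex:x_n=p_n} as the two witnesses. You have simply spelled out the values of $\ul{Q}$ and $\ol{Q}$ explicitly, which is a fine (and slightly more readable) presentation of the same one-line proof.
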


\begin{proof}
This is by Example \ref{ex:coin-tossing-finite} and Example \ref{ex:x_n=p_n}.
\end{proof}

Recall that, if Reality can strongly comply with a event $E$,
then $\overline{Q}(E)=1$.
Example \ref{ex:x_n=p_n} shows that the converse does not hold.


\section*{Acknowledgement}\label{sec:acknowlegement}

The authors appreciate the reviewer for the appropriate comments.
The first author was supported by Grant-in-Aid for JSPS fellows
and the second author by the Aihara Project,
the FIRST program from JSPS, initiated by CSTP.
Both authors were supported by
JSPS Grant-in-Aid for Challenging Exploratory Research Grant Number 25540009.


\end{document}